\documentclass{article}

\usepackage[final]{ProbNum26} 

\usepackage{amsfonts}
\usepackage{amsmath} 
\usepackage{enumitem}  
\usepackage{amsthm}
\usepackage{setspace}
\usepackage{subcaption}

\usepackage{tikz}
\usetikzlibrary{positioning,arrows.meta}

\newtheorem{theo}{Theorem}[section]

\newtheorem{lemm}[theo]{Lemma}%
{\tiny }%

\theoremstyle{remark}
\newtheorem{remark}{Remark}

\usepackage{comment}

\def\X{{\mathcal{X}}}
\def\Y{{\mathcal{Y}}}

\def\N{{\mathcal{N}}}

\def\e{{\varepsilon}}

\usepackage[round]{natbib}

\usepackage[capitalise,nameinlink]{cleveref}

\usepackage{kantlipsum}

\probnumtitle{Bayesian Inference of Discretization Error Means in ODEs via Ensemble Kalman Filtering}
\probnumauthors{%
\name{Shoji Toyota}%
\affiliation{Department of Advanced Information Technology, Kyushu University, Japan}
\and%
\name{Yuto Miyatake}%
\affiliation{D3 Center, The University of Osaka, Japan}
}

\probnumabstract{We propose a Bayesian framework to quantify discretization errors in numerical solutions of ODE models based on observational data. The discretization error is modeled as a random variable, and its mean---referred to as the discretization error mean---is inferred from the observations. By introducing a Markov prior on the temporal evolution of the discretization error mean, we formulate the problem as a state-space model with a linear Gaussian observation process, which enables efficient inference via the Ensemble Kalman Filter. We also propose a specific form of a Markov prior motivated by classical discretization error analysis, in which global errors accumulate from local errors. 
The proposed prior depends on the step size of a numerical solver, and we establish its convergence rate in probability as the step size tends to zero. Numerical experiments on the pendulum system and the FitzHugh--Nagumo model demonstrate the effectiveness of the proposed approach.}

\begin{document}

\section{Introduction}
\label{sec:general_layout}

We consider the problem of finding a solution to a
$d_{\mathcal{X}}$-dimensional ordinary differential equation (ODE)
\begin{equation}\label{eq:ODE}
\frac{dx(t)}{dt} = f\bigl(x(t)\bigr),
\end{equation}
where the solution $x(t)$ takes values in $\mathbb{R}^{d_{\mathcal{X}}}$ and
$f : \mathbb{R}^{d_{\mathcal{X}}} \to \mathbb{R}^{d_{\mathcal{X}}}$ is a continuous map.
In the general case where $f$ is nonlinear, a closed-form solution is typically unavailable.
A classical approach to this problem is to discretize the ODE using numerical solvers,
such as the Euler method or Runge--Kutta methods.
Such discretization introduces numerical errors, often referred to as \emph{discretization errors}.
Understanding the behavior of these errors
is a central topic in numerical analysis
\citep{butcher2016numerical, hairer2020solving, iserles2009first}. 
The main focus is typically on analyzing its asymptotic behavior as the step size approaches zero,
or on deriving bounds on the discretization error. 

In contrast, methods for directly quantifying discretization error have become increasingly important. A typical example arises in the estimation of parameters in ODE models from observational data. Practical approaches, such as simulation-based inference \citep{cranmer2020frontier} or four-dimensional variational data assimilation (4D-Var)~\citep{asch2016data}, often rely on numerical solvers. In these methods, a parameter is chosen so that the resulting numerical solution fits the observations, implicitly assuming sufficient numerical accuracy. However, this assumption may be violated in certain settings, such as chaotic systems or large-scale problems
\citep{conrad2017statistical}. In such cases, discretization error must be explicitly quantified and properly incorporated into the estimation procedure.

Motivated by the need to quantify discretization error, recent research has approached this problem from statistical and machine learning perspectives. 
Prominent examples include Bayesian ODE 
solvers~\citep{beck2024diffusiontemper, kersting2020, le2025modelling, schmidt2021sir, SchoberSarkkaHennig2018, pmlr-v162-tronarp22a, Tronarp2018, TroSarHen2021, dingling2025propagating, kramer2025adaptive}, which formulate ODE discretization as Gaussian process inference. Another important direction is perturbation-based methods~\citep{abdulle2020random, conrad2017statistical, lie2022randomised, lie2019strong}, where stochastic perturbations are introduced into numerical solutions to represent uncertainty. These developments have led to an interdisciplinary field at the interface of numerical analysis and statistics, known as \emph{Probabilistic Numerics}.

Another line of research on quantifying discretization error in ODEs is known as the \emph{discretization error variance} approach~\citep{MARUMO2024modelling, Matsuda2019estimation, miyatake2025quantifying, toyota2025joint}. 
Within this framework, conventional numerical solvers
are used to obtain numerical solutions, while the resulting discretization errors are modeled as random variables. 
The associated variances, referred to as  \emph{discretization error variances}, are regarded as statistical parameters and estimated from observations. 
In contrast to Bayesian ODE solvers and perturbation-based methods, this approach directly infers the discretization error itself based on observations.

Motivated by the discretization error variance approach, we propose a framework for discretization error quantification, which we term the discretization error {\em mean} approach. 
In this setting, the discretization error is modeled as a random variable, and its {\em mean} is estimated from observational data. 
By focusing on the mean, the proposed method enables prediction of the direction of the discretization error, whereas approaches based on the variance permit inference only about its absolute magnitude. 
Inference is carried out within a Bayesian framework, where a prior is placed on the discretization error mean and the corresponding posterior is derived.

A key component of the proposed method is Bayesian inference of the discretization error mean using the Ensemble Kalman Filter (EnKF), a standard technique in data assimilation \citep{burgers1998analysis, evensen2009data}. By imposing a Markov prior on 
the discretization error means, the inference problem can be formulated as a state-space model whose observation process is linear Gaussian, to which the EnKF can be applied.

The specification of a Markov prior plays a central role in the effectiveness of the proposed framework. 
In previous work, \cite{toyota2025joint} proposed a Markov prior on discretization error variances based on insights from classical discretization error analysis for ODEs, namely that the global discretization error can be interpreted as the accumulation of local errors generated at each time step. 
We show that the prior introduced in \cite{toyota2025joint} can be naturally extended to the discretization error {\em mean}. 
The prior depends explicitly on the step size of the numerical solver; we analyze its asymptotic behavior as the step size tends to zero, following arguments analogous to those developed in \cite{toyota2025joint}.

The main contributions of this paper are as follows:
\begin{itemize}[topsep=0pt, partopsep=0pt, itemsep=5pt, parsep=0pt, leftmargin=5pt]

\item We introduce a statistical framework for quantifying discretization error in ODEs, termed the \emph{discretization error mean} approach, which enables inference on the discretization error from observational data, including its direction in addition to its magnitude.

\item We formulate the inference problem as a state-space model by imposing a Markov prior on the time evolution of the discretization error mean, thereby enabling Bayesian inference via the Ensemble Kalman Filter.

\item We propose a Markov prior for the discretization error mean by extending the approach of \cite{toyota2025joint}, and examine its asymptotic behavior as the step size of the numerical solver tends to zero.

\item Through numerical experiments, we demonstrate that the proposed method 
accurately quantifies both the magnitudes and directions of discretization errors.
\end{itemize}

\section{Proposed Method}
\subsection{Problem Setting}
We first describe the problem setting addressed in this paper. For a $d_{\mathcal{X}}$-dimensional ODE~\eqref{eq:ODE}, let its exact solution be denoted by $x(t)$. 
In this study, we numerically solve the equation at discrete time points $t_1, \ldots, t_N$, and denote the resulting numerical solutions by $x_{t_1}, \ldots, x_{t_N}$. 

The goal of this study is to quantify the discretization error $x(t_i) - x_{t_i}$. 
While its behavior has been a major 
subject of theoretical study
in numerical analysis~\citep{butcher2016numerical, hairer2020solving, iserles2009first}, 
typical analyses only derive bounds on 
the error or study its asymptotic properties as the step size approaches zero, without providing a direct quantification of the discretization error.

In this paper, we quantify the discretization error $x(t_i) - x_{t_i}$ 
using time-series observations $y^*_{t_1}, \ldots, y^*_{t_N}$ 
at discrete time points $t_1, \ldots, t_N$. 
These observations are assumed to be generated by the process
\begin{equation}\label{eq:observation}
    y_{t_i}^* = H x(t_i) + \varepsilon_{t_i},
\end{equation}
where $H$ is a full-rank linear observation matrix, and the observation noise 
$\varepsilon_{t_i} \sim \mathcal{N}(0, \Gamma)$ 
is Gaussian with zero mean and a positive definite covariance matrix $\Gamma$.

\subsection{Discretization Error Mean}
To accomplish our objective, we propose 
the \emph{discretization error mean} approach.
In this framework, we model the discretization error $x(t_i) - x_{t_i} $, the difference between the true solution $x(t_i)$ and the numerical solution $x_{t_i}$, as a Gaussian random variable:
\begin{equation}\label{eq:discrizatization_error}
    x(t_i) - x_{t_i} \sim \mathcal{N}(\mu_{t_i}, \Sigma),
\end{equation}
or  equivalently, 
\begin{equation}\label{eq:observation_modif}
     x(t_i) \sim \mathcal{N}(x_{t_i} + \mu_{t_i}, \Sigma),
\end{equation}
where $\Sigma$ is a fixed covariance matrix, and $\mu_{t_i}$ denotes the mean, which is estimated from observations and referred to as the {\em discretization error mean}. 
The discretization error mean is estimated using the likelihood 
$p(y_{t_1},...,y_{t_N}  \mid \mu_{t_0},...,\mu_{t_N} ) $.
Here, under the 
models in \eqref{eq:observation} and \eqref{eq:observation_modif}, the likelihood  can be expressed as 
\begin{align*}
p(y_{t_1},..., y_{t_N} |\mu_{t_0}, ..., \mu_{t_N} ) 
 = \prod_{i=1}^N p(y_{t_i} | \mu_{t_i}),
\end{align*}
where $p(y_{t_i} | \mu_{t_i})$ is defined by 
\begin{align}
&p(y_{t_i}\mid \mu_{t_i}) = \mathcal N \left(
y_{t_i};
\, H(x_{t_i}+\mu_{t_i}),
\, H \Sigma H^\top + \Gamma
\right), \label{eq:each_likelihood} 
\end{align}
a normal distribution with its mean and covariance given by $H(x_{t_i}+\mu_{t_i})$ and 
$H \Sigma  H^\top + \Gamma$ 
respectively. By utilizing this likelihood, we can estimate the discretization error mean $\mu_{t_i}$, enabling us to quantify the discretization error from observations $y_{t_1}^*, ..., y_{t_N}^*$. 

\begin{figure*}[t]
    \centering
\resizebox{\linewidth}{!}{
    \begin{tikzpicture}
    \node[circle, draw, text width=0.9cm,align=center] (c1) {$\mu_{t_{i-1}}$};
    \node[circle, draw, text width=0.9cm,align=center, right = 4.0cm of c1] (c2) {$\mu_{t_i}$};
    \node[circle, draw, text width=0.9cm,align=center, above = 1.4cm of c1] (c3) {$y_{t_{i-1}}$};
    \node[circle, draw, text width=0.9cm,align=center, above = 1.4cm of c2] (c4) {$y_{t_i}$};
    \node[left = 4.0cm of c1] (c5) {};
    \node[right = 4.0cm of c2] (c6) {};

    \draw [-{Stealth[length=2mm]}, thick] (c1)--node[above, fill=white,sloped]{$p(\mu_{t_i}\mid \mu_{t_{i-1}})$} (c2);
    \draw [-{Stealth[length=2mm]}, thick] (c1)--node[fill=white,sloped,rotate=-90]{$\mathcal{N}(H(x_{t_{i-1}} + \mu_{t_{i-1}}),H \Sigma H^\top + \Gamma)$} (c3);
    \draw [-{Stealth[length=2mm]}, thick] (c2)--node[fill=white,sloped,rotate=-90]{$\mathcal{N}(H (x_{t_{i}} + \mu_{t_{i}}),H \Sigma H^\top + \Gamma)$} (c4);
    \draw [-{Stealth[length=2mm]}, thick] (c5)--node[above, fill=white,sloped]{$p(\mu_{t_{i-1}}\mid \mu_{t_{i-2}})$}(c1);
    \draw [-{Stealth[length=2mm]}, thick] (c2)--node[above, fill=white,sloped]{$p(\mu_{t_{i+1}}\mid \mu_{t_i})$}(c6);
    \end{tikzpicture} }
    
    \caption{Reformulation of the objective in~(\ref{eq: targetproblem}) as a state-space model.
The latent transition follows the Markov prior
$p(\mu_{t_{i+1}} \mid \mu_{t_i})$, and the observation model is
$\mathcal{N}\!\big(H(x_{t_i} + \mu_{t_i}),\, H \Sigma H^\top  + \Gamma\big)$.
Equivalently, the observation equation can be written as
$y_t = H \mu_t + \varepsilon_t$ with
$\varepsilon_t \sim \mathcal{N}\!\big(H x_t,\, H \Sigma H^\top  + \Gamma\big)$,
which allows the use of the Ensemble Kalman Filter.
}
    \label{fig:state_space2}
\end{figure*}

\begin{remark}
Modeling the discretization error as a random variable and estimating its moments from observations was originally proposed by \citet{Matsuda2019estimation} and has been further developed in subsequent works \citep{miyatake2025quantifying, MARUMO2024modelling, toyota2025joint}. In this framework, the discretization error is modeled as
\begin{equation}\label{eq:discrizatization_error_variance}
    x(t_i) - x_{t_i} \sim \mathcal{N}(0, \Sigma),
\end{equation}
where the covariance matrix 
$\Sigma$, 
referred to as the \emph{discretization error variance}, is treated as a statistical quantity to be inferred from observations, while the mean is fixed at zero for all $t_i$.
This formulation allows one to impose a monotonicity constraint on $\Sigma_{t_i}$ by treating the discretization error variances as time-varying quantities, which may be difficult to incorporate in our approach based on modeling the mean of the discretization error.
On the other hand, our 
discretization error mean approach
enables us to infer the direction of the error and thereby adjust the numerical solution toward the exact solution. Another advantage will become apparent in the state-space formulation described in the following subsection. 
\end{remark}

\subsection{State-Space Model Interpretation of Discretization Error Mean Estimation}

We estimate the discretization error mean $\mu_{t_i}$ from observations $y_{t_1}^*, \ldots, y_{t_N}^*$ in a Bayesian manner. Specifically, we place a prior $p(\mu_{t_0}, \ldots, \mu_{t_N})$ on the time series of discretization error means and evaluate the  posterior:
\begin{align}
&p(\mu_{t_0}, \ldots, \mu_{t_N} \mid y_{t_1}^*, \ldots, y_{t_N}^*) \notag \\
&\propto
p(y_{t_1}^*, \ldots, y_{t_N}^* \mid \mu_{t_0}, \ldots, \mu_{t_N})
p(\mu_{t_0}, \ldots, \mu_{t_N}).
\label{eq: targetproblem}
\end{align}
In this study, we adopt a Markov prior for the time series of discretization error means:
\begin{equation}
\label{eq:markov_prior}
p(\mu_{t_0}, \ldots, \mu_{t_N})
=
p(\mu_{t_0})
\prod_{i=0}^{N-1}
p(\mu_{t_{i+1}} \mid \mu_{t_i}).
\end{equation}

Under the Markov assumption, the discretization 
error mean $\mu_{t_i}$ and the observations $y_{t_i}^*$ form a state-space model (Fig.~\ref{fig:state_space2}). 
The dynamics of the latent state are described by the temporal evolution of the discretization error mean, $\mu_{t_i} \rightarrow \mu_{t_{i+1}}$, 
with a stochastic transition governed by 
$
p(\mu_{t_{i+1}} \mid \mu_{t_i}).
$
The observation process is specified by the likelihood
$
p(y_{t_i} \mid \mu_{t_i}),
$
as defined in \eqref{eq:each_likelihood}.

Under the state-space formulation, evaluating the posterior \eqref{eq: targetproblem} reduces to latent state estimation for a given state-space model. 
Among the available approaches, we employ the Ensemble Kalman Filter (EnKF) \citep{evensen2009data}.  The EnKF can be applied when the observation model of the target state-space model,
\( p(y_{t_i} \mid \mu_{t_i}) \), can be expressed as a linear transformation of \( \mu_{t_i} \) with additive Gaussian noise. 
In our case, the observation process can be written as
\begin{equation}\label{observation_process_reduced}
y_{t_i} = H \mu_{t_i} + \varepsilon_{t_i},
\end{equation}
where
$
\varepsilon_{t_i} \sim \mathcal{N}\left(
H x_{t_i},\;
H \Sigma H^\top + \Gamma
\right), 
$\footnote{While EnKF typically assumes that the observation noise $\varepsilon_{t_i}$ is Gaussian with zero mean, 
it remains applicable even when the noise has a non-zero mean, 
without any essential modification, as shown in the following subsections.}
 enabling us to apply EnKF to our inference.

\begin{remark}\label{remark;linear_observation}
An advantage of the discretization error mean approach, compared with the variance approach, is that it can be reduced to inference in a state-space model with a {\em linear Gaussian} observation process \eqref{observation_process_reduced}. In the context of the discretization error variance approach, \cite{toyota2025joint} impose a Markov prior on the discretization error variance and perform Bayesian inference for this variance 
by reducing the problem to a state-space model.
However, since the associated observation process cannot be expressed in linear Gaussian form, inference is carried out using a particle filter. 
It is known that particle filters tend to perform poorly in high-dimensional systems compared with the EnKF, and the EnKF is often preferable 
in such settings. Therefore, the ability to directly leverage the EnKF constitutes an important advantage of modeling the mean of the discretization error rather than its variance.
\end{remark}

Hereafter, the tuples $\{ y_{t_1}, \ldots, y_{t_i} \}$ and 
$\{ \mu_{t_0}, \ldots, \mu_{t_i} \}$ are abbreviated as 
$y_{1:i}$ and $\mu_{0:i}$, respectively. In the EnKF, the distribution $p(\mu_{t_i} \mid y_{1:i}^*)$ is approximated by
\[
p(\mu_{t_i} \mid y_{1:i}^*) 
\approx \frac{1}{N_e} \sum_{n=1}^{N_e} \delta_{\hat{\mu}_{i \mid i}^{n}},
\]  
i.e., an ensemble of discrete points 
$\{ \hat{\mu}_{i \mid i}^{n} \}_{n=1}^{N_e}$. From the ensemble $\{ \hat{\mu}_{i \mid i}^{n} \}_{n=1}^{N_e}$ of $p(\mu_{t_i} \mid y_{1:i}^*)$, 
we construct the ensemble 
$\{ \hat{\mu}_{i+1 \mid i+1}^{n} \}_{n=1}^{N_e}$ at the next time $t_{i+1}$ 
through the two steps described below, referred to as the 
\emph{prediction step} (Subsection~\ref{subsec:prediction}) 
and the \emph{correction step} (Subsection~\ref{subsec:filtering}).  
Finally, a smoothing step is applied to obtain the 
posterior $p(\mu_{0:N} \mid y_{1:N}^*)$, incorporating all observations (Subsection~\ref{subsec:smoothing}).

\begin{algorithm*}
\begin{spacing}{1.2}
\begin{algorithmic}[1]

\State \textbf{Input:}
Prior $p(\mu_{t_0}) \prod_{i=0}^{N-1} p(\mu_{t_{i+1}}|\mu_{t_{i}})$,
observations $\{y_{t_i}^*\}_{i = 1}^N$,
ensemble size $N_e$,
lag length $L$. 

\State $~~~~~~~~~~$An ODE  $\frac{dx(t)}{dt} = f(x(t))$ and a numerical solver.

\State Generate an initial ensemble
$\{\hat{\mu}_{0|0}^n\}_{n=1}^{N_e} \sim p(\mu_{t_0})$

\For{$i = 0,\dots,N-1$}

    \For{$n=1,\dots,{N_e}$} \Comment{Prediction Step (Subsection \ref{subsec:prediction})}
        \State Simulate
        $\hat{\mu}_{i+1|i}^n
        \sim p(\mu_{t_{i+1}}\mid \mu_{t_{i}}=\hat{\mu}_{i|i}^n)$
    \EndFor

    \State Compute sample mean $\hat{\mu}_{i+1|i}$ and sample covariance 
    $\hat{\Sigma}_{i+1|i}$ from $\{ \hat{\mu}_{i+1|i}^n\}_{n = 1}^{N_e}$
    \For{$j=\text{max}(i+1 - L, 0),\dots,i+1$}    \\ \Comment{Smoothing is applied over $[0,\, i+1]$ if $i+1 \le L$, and over $[i+1-L,\, i+1]$ otherwise.}
        \State Compute cross-covariance
        $\hat{\Sigma}_{j, i+1 | i}$ from $\{ \hat{\mu}_{i+1|i}^n\}_{n=1}^{N_e}$ and $\{ \hat{\mu}_{j|i}^n\}_{n=1}^{N_e}$. 
        \State Compute Kalman gain
        $
        \hat{K}_{j|i}
        =
        \hat{\Sigma}_{j, i+1|i} H^\top
        \left(
        H \hat{\Sigma}_{i+1, i+1|i} H^\top + H \Sigma H^{\top}  + \Gamma
        \right)^{-1}
        $.
        \For{$n=1,\dots,N_e$}
        \State Draw
        $\hat{w}_{j}^n \sim \mathcal{N}(0, H \Sigma H^{\top}+\Gamma)$
            \State Update
            $
            \hat{\mu}_{j|i+1}^n  :=
\hat{\mu}_{j|i}^n
+
\hat{K}_{j|i}
\left(
y_{t_{i+1}}^*
+
\hat{w}^n_j
-
H \left( x_{t_{i+1}} + \hat{\mu}_{i+1|i}^n \right)
\right).
            $  \\
    
\Comment{Correction  Step (Subsection \ref{subsec:filtering}) with fixed lag smoothing (Subsection \ref{subsec:smoothing}).}
        \EndFor
    \EndFor
\EndFor
\State \Return $\{\hat{\mu}_{1|1+L}^n \}_{n=1}^{N_e}, \{ \hat{\mu}_{2|2+L}^n \}_{n=1}^{N_e},...,\{ \hat{\mu}_{N-L|N}^n \}_{n=1}^{N_e}, \{\hat{\mu}_{N-L+1|N}^n \}_{n=1}^{N_e},..., \{\hat{\mu}_{N|N}^n\}_{n=1}^{N_e}$,  \\
$\textcolor{white}{a}~~~~~~~~~~~~~~~~~~~~~~~~~~~$ ensembles $p(\mu_1|y_{1:1+L}^*), p(\mu_2|y_{1:2+L}^*),...,p(\mu_{N-L}|y_{1:N}^*), p(\mu_{N-L+1}|y_{1:N}^*),...,p(\mu_{N}|y_{1:N}^*)$.

\end{algorithmic}
\end{spacing}
\caption{Ensemble Kalman Filter for Discretization Error Quantification in an ODE Models}
\label{alg:enkf_smoothing}
\end{algorithm*}

\subsubsection{Prediction Step}\label{subsec:prediction}

Assume that we have an ensemble
$
\{ \hat{\mu}_{i \mid i}^n \}_{n=1}^{N_e}
$
that approximates the distribution $p(\mu_{t_i} \mid y_{1:i}^*)$.
In the prediction step, we construct a new ensemble
$
\{ \hat{\mu}_{i+1 \mid i}^n \}_{n=1}^{Ne}
$
to approximate
\[
p(\mu_{t_{i+1}} \mid y_{1:i}^*) = \int p(\mu_{t_{i+1}} | \mu_{t_{i}}) p(\mu_{t_{i}} \mid y_{1:i}^*) d\mu_{t_{i}}.
\]
This is achieved by propagating each ensemble member forward through the  transition distribution 
\( p(\mu_{i+1} \mid \mu_i) \).
Specifically, for $n = 1, \ldots, N_e$, we simulate
\[
\hat{\mu}_{i+1 \mid i}^n \sim
p\bigl( \mu_{i+1} \mid \mu_i = \hat{\mu}_{i \mid i}^n \bigr).
\]

\subsubsection{Correction Step}\label{subsec:filtering}

In this step, we construct an ensemble
$
\{ \hat{\mu}_{i+1 \mid i+1}^n \}_{n=1}^{N_e}
$
that approximates the posterior 
$
p(\mu_{t_{i+1}} \mid y_{1:i+1}^*),
$
given the ensemble
$
\{ \hat{\mu}_{i+1 \mid i}^n \}_{n=1}^{N_e}
$
obtained in the prediction step.

In this step, we first compute the covariance matrix 
\(
\hat{\Sigma}_{i+1 \mid i}
\)
from the ensemble
\(
\{ \hat{\mu}_{i+1 \mid i}^n \}_{n=1}^{N_e}
\)
as
\begin{align}
&\hat{\Sigma}_{i+1 \mid i} \label{eq:covariance_correction} \\
& :=
\frac{1}{N_e-1}
\sum_{n=1}^{N_e}
\left(
\hat{\mu}_{i+1 \mid i}^n
-
\hat{\mu}_{i+1 \mid i}
\right)
\left(
\hat{\mu}_{i+1 \mid i}^n
-
\hat{\mu}_{i+1 \mid i}
\right)^{\top}, \notag
\end{align}
where $\hat{\mu}_{i+1 \mid i}
:=
\frac{1}{N_e}
\sum_{n=1}^{N_e}
\hat{\mu}_{i+1 \mid i}^n$ denotes the sample mean.
With this covariance matrix $\hat{\Sigma}_{i+1 \mid i}$, we compute
\begin{align}
\hat{K}_{i+1 \mid i}
:=
\hat{\Sigma}_{i+1 \mid i} H^{\top}
\left(
H \hat{\Sigma}_{i+1 \mid i} H^{\top}
+ H \Sigma H^{\top}
+ \Gamma
\right)^{-1},
\label{eq:gain_correction}
\end{align}
which is referred to as the Kalman gain. Finally, we obtain an ensemble approximating
\(
p(\mu_{t_{i+1}} \mid y_{1:i+1}^*)
\)
by \emph{correcting} each ensemble member $\hat{\mu}_{i+1 \mid i}^n$
using the Kalman gain $\hat{K}_{i+1 \mid i}$ and the new observation
$y_{t_{i+1}}^*$:
\begin{align} &\hat{\mu}_{i+1 \mid i+1}^n := \label{eq:filter_ensemble} \\ &\hat{\mu}_{i+1 \mid i}^n + \hat{K}_{i+1|i} \left( y_{t_{i+1}}^* + \hat{w}^n_{i+1} - H\left(x_{t_{i+1}} + \hat{\mu}_{i+1 \mid i}^n \right) \right), \notag \end{align}
where
\(
\hat{w}_{i+1}^n
\)
are i.i.d.~samples from
\(
\mathcal{N}\bigl(0,\, H \Sigma H^{\top} + \Gamma \bigr).
\)

The following theorem ensures that the first and second moments of the updated
ensemble \eqref{eq:filter_ensemble}
coincide with those of $p(\mu_{t_{i+1}} \mid y_{1:i+1}^*)$
in the limit as $N_e \to \infty$, under the Gaussian approximation of
$p(\mu_{t_{i+1}} \mid y_{1:i+1}^*)$. A proof is provided in Appendix \ref{appendix:filtering}. 

\begin{theo}\label{eq:filtering_correspondance}
Assume that $p(\mu_{t_{i+1}} \mid y_{1:i}^*)$ is a Gaussian distribution 
$\mathcal{N}(\mu_{i+1|i}, \Sigma_{i+1|i})$ and that
$
\{ \hat{\mu}_{i+1 \mid i}^n \}_{n=1}^{N_e}
$
are i.i.d.\ samples from this distribution.
Then, the sample mean $\hat{\mu}_{i+1|i+1}$ and sample covariance $\hat{\Sigma}_{i+1|i+1}$
of the ensemble
$
\{ \hat{\mu}_{i+1 \mid i+1}^n\}_{n=1}^{N_e}
$ defined by \eqref{eq:filter_ensemble}
converge in probability to the true mean ${\mu}_{i+1|i+1}$
and covariance ${\Sigma}_{i+1|i+1}$ of
$p(\mu_{t_{i+1}} \mid y_{1:i+1}^*)$ as $N_e \to \infty$, 
i.e.,
\begin{align*}
    \hat{\mu}_{i+1|i+1} \xrightarrow{P} {\mu}_{i+1|i+1}, \quad
    \hat{\Sigma}_{i+1|i+1} \xrightarrow{P} {\Sigma}_{i+1|i+1}.
\end{align*}

\end{theo}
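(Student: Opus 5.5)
First I will identify the targets. By the Gaussian assumption on $p(\mu_{t_{i+1}} \mid y_{1:i}^*)$ and the linear Gaussian observation model \eqref{observation_process_reduced}, with noise mean $Hx_{t_{i+1}}$ and covariance $R := H\Sigma H^\top + \Gamma$, the standard Kalman update gives
\begin{align*}
\mu_{i+1|i+1} &= \mu_{i+1|i} + K\bigl(y_{t_{i+1}}^* - H(x_{t_{i+1}} + \mu_{i+1|i})\bigr),\\
\Sigma_{i+1|i+1} &= (I - KH)\Sigma_{i+1|i},
\end{align*}
where $K := \Sigma_{i+1|i}H^\top(H\Sigma_{i+1|i}H^\top + R)^{-1}$. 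These follow from completing the square in the product of the Gaussian prior and the likelihood \eqref{eq:each_likelihood}, after shifting $y$ by $Hx_{t_{i+1}}$. The only change from the textbook case is this deterministic shift, so I will simply record the formulas.

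Next I will handle the gain. By the weak law of large numbers, $\hat{\mu}_{i+1|i} \xrightarrow{P} \mu_{i+1|i}$ and $\hat{\Sigma}_{i+1|i} \xrightarrow{P} \Sigma_{i+1|i}$, since Gaussian samples have finite fourth moments. The matrix $H\hat{\Sigma}_{i+1|i}H^\top + R$ is positive definite almost surely because $R \succ 0$, so the map $S \mapsto SH^\top(HSH^\top+R)^{-1}$ is continuous on positive semidefinite matrices. The continuous mapping theorem then gives $\hat{K}_{i+1|i} \xrightarrow{P} K$.

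For the mean, I will average \eqref{eq:filter_ensemble} over $n$ to get
\[
\hat{\mu}_{i+1|i+1} = (I - \hat{K}H)\hat{\mu}_{i+1|i} + \hat{K}\bigl(y^*_{t_{i+1}} - Hx_{t_{i+1}} + \bar{w}\bigr),
\]
where $\bar{w}$ is the sample mean of the $\hat{w}^n_{i+1}$, and $\bar w \xrightarrow{P} 0$. Slutsky's lemma and continuous mapping then yield $\hat{\mu}_{i+1|i+1} \xrightarrow{P} \mu_{i+1|i+1}$.

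The covariance is the main obstacle, because $\hat{K}$ is random and depends on the whole ensemble, so the updated members are not i.i.d. I will avoid that dependence by expanding algebraically. The deviations satisfy
\[
\hat{\mu}^n_{i+1|i+1} - \hat{\mu}_{i+1|i+1} = (I - \hat{K}H)a_n + \hat{K}b_n,
\]
where $a_n := \hat{\mu}^n_{i+1|i} - \hat{\mu}_{i+1|i}$ and $b_n := \hat{w}^n_{i+1} - \bar{w}$. This gives
\[
\hat{\Sigma}_{i+1|i+1} = (I-\hat{K}H)\hat{\Sigma}_{i+1|i}(I-\hat{K}H)^\top + \hat{K}\hat{R}\hat{K}^\top + (I-\hat{K}H)\hat{C}\hat{K}^\top + \hat{K}\hat{C}^\top(I-\hat{K}H)^\top,
\]
where $\hat{R}$ is the sample covariance of the $\hat{w}^n_{i+1}$ and $\hat{C}$ is the sample cross-covariance of the two independent families. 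By the law of large numbers, $\hat{R} \xrightarrow{P} R$ and $\hat{C} \xrightarrow{P} 0$, using independence and finite second moments. Continuous mapping therefore gives the limit $(I-KH)\Sigma_{i+1|i}(I-KH)^\top + KRK^\top$. Finally, I will verify the Joseph-form identity
\[
(I-KH)\Sigma_{i+1|i}(I-KH)^\top + KRK^\top = (I-KH)\Sigma_{i+1|i}.
\]
This holds because $K(H\Sigma_{i+1|i}H^\top + R) = \Sigma_{i+1|i}H^\top$, so after expanding, the terms $-\Sigma_{i+1|i}H^\top K^\top + K(H\Sigma_{i+1|i}H^\top+R)K^\top$ cancel. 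This proves $\hat{\Sigma}_{i+1|i+1} \xrightarrow{P} \Sigma_{i+1|i+1}$.
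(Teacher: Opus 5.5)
Your proposal is correct and follows essentially the same route as the paper. You average the update to get the mean, then expand the deviations $(I-\hat K H)a_n + \hat K b_n$ into the same prior-covariance, noise-covariance and cross terms before closing with the Joseph-form identity; the only minor differences are that you justify $\hat K_{i+1|i} \xrightarrow{P} K_{i+1|i}$ explicitly via the continuous mapping theorem (the paper leaves this implicit), and you obtain the nonzero-mean Gaussian update by shifting $y$ by $Hx_{t_{i+1}}$ rather than re-deriving it through Woodbury as in Lemma~\ref{eq:Bayes_Gauus}.
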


\subsubsection{Smoothing}\label{subsec:smoothing}

The prediction and correction steps yield 
$
p(\mu_{t_i} \mid y_{1:i}^*)
$ for $0 \leq i \leq N$. 
However, our goal is to evaluate the distribution
$
p(\mu_{t_i} \mid y_{1:N}^*),
$
the posterior conditioned on all observations at $t_1,..., t_N$.
While fixed-interval smoothing and other more advanced smoothing variants (e.g., \citet{kramer2025numerically}) are also available, we focus on fixed-lag smoothing, where
$
p(\mu_{t_i} \mid y_{1:i+L}^*)
$
is used as an approximation of $
p(\mu_{t_i} \mid y_{1:N}^*)$.

In fixed-lag smoothing, we introduce the augmented state
\begin{equation*}
{\boldsymbol{\mu}}_{t_{i}}
=
\left(
\mu_{t_{i}},\,
\mu_{{t_{i-1}}},\,
\ldots,\,
\mu_{{t_{i-L}}}
\right)^{\top}.
\end{equation*}
The dynamics of the augmented state from ${\boldsymbol{\mu}}_{t_i}$ to
\[
{\boldsymbol{\mu}}_{t_{i+1}}
=
\left(
\mu_{t_{i+1}},\,
\mu_{t_{i}},\,
\ldots,\,
\mu_{t_{i + 1 - L}}
\right)^{\top}
\]
are given as follows: the first component of ${\boldsymbol{\mu}}_{t_{i+1}}$ 
is assumed
to follow
$
\mu_{t_{i+1}}
\sim
p \left(
\mu_{t_{i+1}}
\mid
\mu_{t_{i}}
\right), 
$ and the remaining components are deterministically shifted from
\[
\left(
\mu_{t_{i}},\,
\mu_{{t_{i-1}}},\,
\ldots,\,
\mu_{{t_{i+1 - L}}}
\right)^{\top},
\]
which correspond to the first through $L$-th components of
${\boldsymbol{\mu}}_{t_i}$. By defining the observation model as
\begin{equation*}
p(y_{t_i} \mid {\boldsymbol{\mu}}_{t_i})
=
p(y_{t_i} \mid \mu_{t_i}),
\end{equation*}
we obtain an augmented state-space model for ${\boldsymbol{\mu}}_{t_i}$ and $y_{t_i}$. 
By solving this augmented state-space model with the EnKF, we obtain an ensemble approximation of 
$p({\boldsymbol{\mu}}_{t_i} \mid y_{1:i}^*)$. 
By extracting the final component of each ensemble member, we obtain an estimate of 
$p(\mu_{t_{i-L}} \mid y_{1:i}^*)$, 
where observations up to $L$ time steps ahead are incorporated into the estimation of $\mu_{t_{i-L}}$.

{For an ensemble of
$p({\boldsymbol{\mu}}_{t_{i+1}} \mid y_{1:i}^*)$ denoted by 
\begin{equation*}
  \left\{ \hat{\boldsymbol{\mu}}_{{i+1|i}}^n \right\} = \left\{\left( \hat{\mu}_{i+1|i }^n, \ldots, \hat{\mu}_{i+1-L|i}^n  \right)^{\top} \right\}_{n=1}^{N_e},
\end{equation*}
each ensemble member is updated via the fixed-lag smoothing step as
\begin{align}
\begin{split}
&\hat{\mu}_{j|i+1}^n  \\ & \ :=
\hat{\mu}_{j|i}^n
+
\hat{K}_{j|i}
\left(
y_{t_{i+1}}^*
+
\hat{w}^n_{j|i}
-
H \left( x_{t_{i+1}} + \hat{\mu}_{i+1|i}^n \right)
\right). 
\end{split}\label{eq:simplified_smoothing}
\end{align}
The derivation is given in Appendix~\ref{appendix:smoothing}. Here, $\hat{\Sigma}_{j, i+1 \mid i}$ denotes the sample cross-covariance between
$\{ \hat{\mu}_{j \mid i}^n \}_{n=1}^{N_e}$ and
$\{ \hat{\mu}_{i+1 \mid i}^n \}_{n=1}^{N_e}$ for $j = i+1 -L,..., i+1$, 
and  the smoothing Kalman gain $\hat{K}_{j|i}$ is given by
\begin{equation*}
\hat{K}_{j|i}
=
\hat{\Sigma}_{j, i+1|i} H^\top
\left(
H  \hat{\Sigma}_{i+1, i + 1\mid i}  H^{\top} + H \Sigma H^\top + \Gamma
\right)^{-1}.
\end{equation*}
}
\vspace{-5mm}

\section{A Markov Prior on Discretization Error Mean}
At this stage, we have not specified a Markov prior on the discretization error \emph{mean}. 
In \cite{toyota2025joint}, a Markov prior was introduced for the discretization error \emph{variances}, derived from an error propagation analysis in numerical analysis. 
Here, we adopt the same underlying idea to model the discretization error \emph{mean}. The exposition below follows that of \cite{toyota2025joint}, with the notation adapted to the current setting for the discretization error mean.

\subsection{Local Propagation of the Global Error}

In this subsection, we briefly review the standard theory of numerical error analysis.
For completeness and to fix notation, we summarize the distinction between the \emph{local error} and the \emph{global error}, and describe the propagation mechanism of the latter. 

We make the following assumptions:
\vspace{-1mm}
\begin{itemize}[topsep=5pt, partopsep=0pt, itemsep=5pt, parsep=0pt, leftmargin=11pt]
\item The observation time interval $t_{i+1} - t_i$ is constant.
\item The step size used in the time integrator is denoted by $h$, and $t_{i+1} - t_i = kh$ for some positive integer $k$.
\end{itemize}

The first assumption is introduced solely for clarity of exposition. 
If the second assumption does not hold in practice, the solution at $t = t_i$ can be approximated, for example, by interpolation techniques using neighboring numerical solutions~\citep{hairer2020solving}.

The numerical solution at $t = t_i + jh$ is denoted by $x_{i,j}$ so that $x_{i+1} = x_{i,k}$. 
The time-$h$ flow of the ODE is denoted by $\phi_h$, and the time-$h$ flow of the numerical solver by $\psi_h$. 
The local error is defined as the numerical error induced in a single time step, i.e.,
$\phi_h(x) - \psi_h(x)$. 
We define
\begin{equation*}
L(t_{i,j}) :=   \phi_h(x_{i,j}) - \psi_h(x_{i,j})
=   \phi_h(x_{i,j}) - x_{i,j+1}  .
\end{equation*}
On the other hand, the global error is defined by
\begin{equation*}
G(t_{i,j}) =  x(t_i + jh) - x_{i,j}.
\end{equation*}
We now examine how the global error propagates. Observe that 
\begin{align*} 
G(t_{i,j+1}) & = x(t_i+(j+1)h) - x_{i,j+1} 
\\ &= \phi_h(x(t_i+jh)) - \phi_h(x_{i,j}) + \phi_h(x_{i,j})  - x_{i,j+1} 
\\ & = \phi_h(x(t_i+jh)) - \phi_h(x_{i,j})  + L(t_{i,j}).  
\end{align*}
{\color{black}
The mean-value theorem implies that 
\begin{align*}
    & \phi_h(x(t_i+jh)) - \phi_h(x_{i,j}) \\
    &\quad = \bigg(\int_0^1 \nabla_x \phi_h (x_{i,j} + \theta G(t_{i,j})) \,d\theta  \bigg) G(t_{i,j}) \\
    & \quad \approx \nabla_x \phi_h (x_{i,j}) G(t_{i,j}).
\end{align*}
Here, 
as $\phi_h(x_{i,j}) = x_{i,j} + h f(x_{i,j}) + \mathrm{o} (h)$ by the definition of ODE \eqref{eq:ODE}, we see that
\begin{equation*}
    \nabla_x\phi_h(x_{i,j}) = I + h \nabla_x f(x_{i,j}) + \mathrm{o} (h).
\end{equation*}
By ignoring the term $\mathrm{o}(h)$, the propagation of the global error can be approximated by
\begin{align} 
&G(t_{i,j+1}) \approx (I +  h \nabla_x f(x_{i,j}) ) G(t_{i,j}) + L(t_{i,j}). \label{eq:ge_propagation1} 
\end{align}
}
\subsection{A Markov Prior}\label{subsec:markov_prior} 
In this section, we propose a Markov prior on discretization error means.
Note that the discretization error mean $\mu_{t_{i,j}}$ in our context can be interpreted as a model for the global error $G(t_{i,j})$. 
Motivated by \eqref{eq:ge_propagation1}, we arrive at the following Markov prior, in which the discretization error mean $\mu_{t_{i,j}}$ evolves according to the probabilistic transition below as time advances by $h$: 
\begin{equation}\label{eq:proposed_prior} 
\mu_{t_{i,j+1}} = M_{i, j}  \mu_{t_{i,j}} + \tilde{L}(t_{i,j}), 
\end{equation} 
for $M_{i, j} \sim P~(j = 0, \dots, k-1)$. Here, $P$ denotes a distribution over matrices $M_{i, j} \in \mathbb{R}^{d_\mathcal{X} \times d_\mathcal{X}}$.  The value of $\tilde{L}(t_{i,j})$ is an 
approximation
of $L(t_{i,j})$. 
This can be 
estimated by, for example, either $\tilde{L}(t_{i,j}) = \psi_{h/2} \circ \psi_{h/2} (x_{i,j}) - \psi_h (x_{i,j}) $ or $\tilde{L}(t_{i,j}) = \tilde{\psi}_h (x_{i,j}) - \psi_h (x_{i,j}) $, where $\tilde{\psi}_h$ denotes a higher order numerical solver. These techniques are commonly used to control the step size during time integration~\citep{hairer2020solving}.

The proposed Markov prior involves a distribution $M_{i,j} \sim P_\lambda$, which typically depends on a hyperparameter $\lambda$. In our experiment, for instance, we set $M_{i, j}= m_{i, j} \ I$, with $m_{i, j}$ drawn from a normal distribution $m_{i, j} \sim \N (\alpha, \beta^2)$; in this case, the hyperparameter $\lambda = (\alpha, \beta)$ must be properly chosen. In this study, we  
select the hyperparameter vector by maximizing an ensemble-based estimate of the marginal likelihood $p(y^*_{1:N} | \alpha, \beta)$.

As shown in the next section, the empirically optimal parameters often satisfy the condition 
$\mathbb{E}_{M \sim P_\lambda}[M] \approx I$,
as observed in the discretization error variance case~\citep{toyota2025joint}. 
This suggests that it suffices to restrict attention to hyperparameter candidates $\lambda$ satisfying
$\mathbb{E}_{M \sim P_\lambda}[M] = I$.
In the next subsection, we provide theoretical support for this empirical finding.

\subsection{Asymptotics of the Prior}

We study the asymptotics of the  prior~\eqref{eq:proposed_prior} as the step size $h$ tends to zero. 
For the sake of theoretical simplicity, we identify $\tilde{L}(t_{i,j})$ with the exact local error $L(t_{i,j})$, 
although 
in practice 
it is approximated using two numerical solvers. 
To make the dependence on the step size explicit, we denote the prior by
\begin{align}
p_h \left( \mu_{t_{0,0}}, \ldots, \mu_{t_{N,0}} \right)
&= p_h \left( \mu_{t_0}, \ldots, \mu_{t_N} \right)  \label{eq:prior_stepsize} \\
&= p_h \left( \mu_{t_0} \right) \prod_{i=0}^{N-1} 
   p_h \left( \mu_{t_{i+1}} \mid \mu_{t_i} \right). \notag
\end{align}
Numerical integrators are generally constructed so that their discretization error 
vanishes as $h \to 0$. 
In accordance with this property, the prior $p_h(\mu_{t_i})$ should be specified 
so that $\mu_{t_i}$ converges to $0$ in probability as $h \to 0$. 
The following theorem establishes this 
property and characterizes 
the associated convergence rate. 
The proof is given in Appendix~\ref{appendix_proof_prior}; the argument
closely parallels Theorem 4.1 of~\cite{toyota2025joint}.

\begin{theo}[Modification of {\normalfont \citet[Theorem~4.1]{toyota2025joint}}]
\label{theo_convergence_rate_revised}
Assume that there exist constants $C_M, C_L, C_\mu \in \mathbb{R}_{>0}$ satisfying  (I) $\mathbb{E}_{M\sim P_\lambda}\!\left[\|I - M\|_{\text{op}}\right]
\leq  C_M h,$ (II) $L(t_{i,j}) \leq C_L  h^{a+1}$, (III) $\mathbb{E}\!\left[\|\mu_{t_0}\| \right]
\leq C_\mu h^{b}.$
 Assume also that, for every $i$ and $j$, 
(IV) $M_{i, j} \sim P$ is independent of $\mu_{t_{i, j}}$.
Then, for all $i \ge 1$,
\[
\mu_{t_i}
=
\mathcal{O}_p\!\left(h^{\min({a},{b})}\right)
\qquad (h \to 0).
\]
\end{theo}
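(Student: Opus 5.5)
The plan is to bound $\mathbb{E}\|\mu_{t_{i,j}}\|$ directly by a discrete Gronwall-type recursion along the fine grid $t_{i,j}$, and then pass from a moment bound to $\mathcal{O}_p$ via Markov's inequality. Throughout, I read (II) as a norm bound $\|L(t_{i,j})\|\le C_L h^{a+1}$, and I treat $\tilde L$ as the exact, deterministic quantity $L$, as the paper stipulates before the statement.

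First, fix $i$ and $j$. Take norms in the transition \eqref{eq:proposed_prior}, writing $M_{i,j} = I - (I-M_{i,j})$:
\begin{align*}
\|\mu_{t_{i,j+1}}\| &\le \|\mu_{t_{i,j}}\| + \|I-M_{i,j}\|_{\text{op}}\,\|\mu_{t_{i,j}}\| + \|L(t_{i,j})\|.
\end{align*}
Take expectations and use (IV): independence of $M_{i,j}$ and $\mu_{t_{i,j}}$ factorizes the middle term into $\mathbb{E}\|I-M_{i,j}\|_{\text{op}}\,\mathbb{E}\|\mu_{t_{i,j}}\|$. Then (I) and (II) give
\begin{align*}
e_{i,j+1} \le (1+C_M h)\, e_{i,j} + C_L h^{a+1}, \qquad e_{i,j}:=\mathbb{E}\|\mu_{t_{i,j}}\|.
\end{align*}
Since $\mu_{t_{i+1}} = \mu_{t_{i,k}}$ and $\mu_{t_{i+1,0}}=\mu_{t_{i+1}}$, this recursion chains across observation intervals into a single recursion over $n = ik+j$ fine steps.

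Second, unroll the recursion from $t_0$ to $t_i$, which takes $n = ik = (t_i-t_0)/h$ steps:
\begin{align*}
e_{n} \le (1+C_M h)^{n} e_0 + C_L h^{a+1}\sum_{m=0}^{n-1}(1+C_M h)^m \le e^{C_M (t_i - t_0)}\Bigl(C_\mu h^{b} + C_L (t_i-t_0)\, h^{a}\Bigr).
\end{align*}
Here I use $(1+C_M h)^m \le e^{C_M m h}\le e^{C_M(t_i-t_0)}$, and bound the geometric sum by $n$ times this exponential, with $nh = t_i - t_0$. Because the observation times $t_i$ are fixed as $h\to0$ (only $k$ grows), the exponential factor is a constant independent of $h$. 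Hence $\mathbb{E}\|\mu_{t_i}\| \le C_i h^{\min(a,b)}$ for $h\le 1$.

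Finally, Markov's inequality gives $P(\|\mu_{t_i}\| > K h^{\min(a,b)}) \le C_i/K$, which can be made arbitrarily small uniformly in $h$. This is exactly $\mu_{t_i}=\mathcal{O}_p(h^{\min(a,b)})$.

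The main obstacle is the expectation step. It requires (IV) in a usable form: $M_{i,j}$ must be independent of $\mu_{t_{i,j}}$, and $\mathbb{E}\|\mu_{t_{i,j}}\|$ must be finite at every stage. Finiteness follows inductively from (III) and the recursion itself. A second concern is that the number of fine steps grows like $1/h$, so the local error contributions must be summed carefully: the bound $n\cdot h^{a+1} = O(h^{a})$ is the source of the loss of one order relative to the local error. The $(1+C_Mh)^n$ growth must also stay bounded, which is precisely why (I) needs $\mathbb{E}\|I-M\|$ to be $O(h)$.
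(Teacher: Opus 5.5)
Your proposal is correct and follows essentially the same route as the paper: Markov's inequality reduces the claim to a bound on $\mathbb{E}\|\mu_{t_i}\|$, and condition (IV) factorizes the expectation to give $e_{i,j+1}\le(1+C_Mh)e_{i,j}+C_Lh^{a+1}$; unrolling this with $(1+C_Mh)^n\le e^{C_M(t_i-t_0)}$ then yields $\mathcal{O}(h^{\min(a,b)})$ for $h\le 1$. The only difference is cosmetic: you bound the geometric sum by $n$ times its largest term rather than summing it exactly, so the constant becomes $(t_i-t_0)e^{C_M(t_i-t_0)}$ instead of $(e^{C_M(t_i-t_0)}-1)/C_M$, and the rate is unchanged.
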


Condition~(I) ensures that the random matrix $M_{i,j}$
is a small perturbation of the identity matrix.
This is consistent with the empirical finding $\mathbb{E}_{M \sim P_\lambda}[M] \approx I$ shown in Tables \ref{table:hypara_pendulum} and \ref{table:hypara_FN},
especially for small step sizes $h$.
Condition~(II)  defines a given numerical method to be of order $a$,
i.e., the local error is $\mathcal{O}(h^{a+1})$.
Condition~(III) corresponds to the choice of a prior distribution $p_h(\mu_0)$ at the initial time $t_0$. 
This condition can be readily satisfied, since we can specify $p_h(\mu_0)$ flexibly.

This theorem shows that the proposed prior has a natural convergence rate that reflects insights from numerical analysis.
For many numerical methods, if the local error
satisfies $L(t_{i,j}) = \mathcal{O}(h^{a+1})$, then the global error
$G(t_{i,j})$ is $\mathcal{O}(h^{a})$, 
provided that the vector field of the underlying ODE is Lipschitz continuous
~\citep{butcher2016numerical, hairer2020solving, iserles2009first}.
Since the mean of the discretization error in this study corresponds to the global error, 
the proposed prior is expected to be $\mathcal{O}_p(h^{a})$ under Condition~(II). 
This theorem shows that this objective is achieved by constructing $P$ and 
$p(\mu_{t_0})$ to satisfy Conditions~(I) and~(III), respectively.

\begin{figure*}[t]
    \centering

    \begin{subfigure}{\textwidth}
        \centering
        \begin{minipage}{0.5\textwidth}
            \centering
            \includegraphics[width=\linewidth]{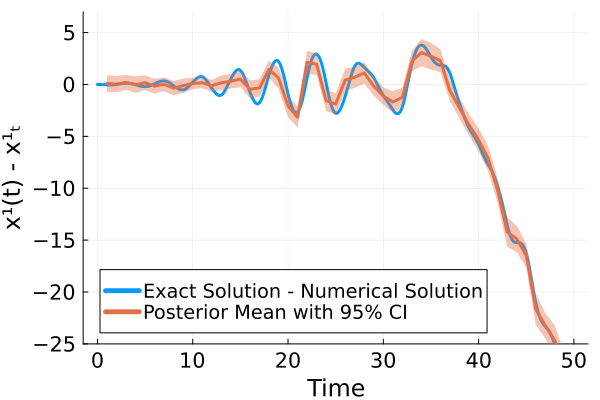}
        \end{minipage}\hfill
        \begin{minipage}{0.5\textwidth}
            \centering
            \includegraphics[width=\linewidth]{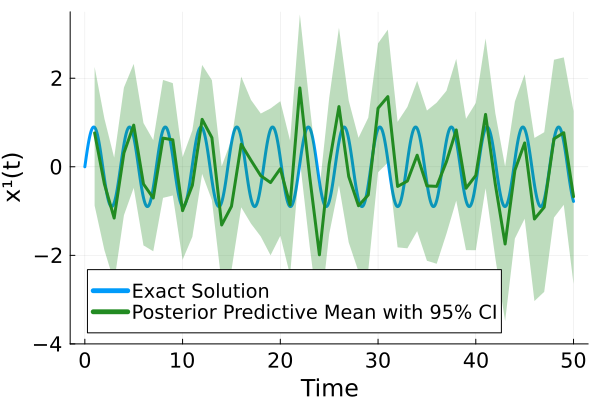}
        \end{minipage}
            \end{subfigure}
\vspace{-3mm}
\caption{Discretization error quantification results for $x^1(t)$ in the pendulum system.
\textbf{Left:} The mean and $95\%$ credible interval of the posterior $p(\mu_{t_i}^1 \mid y_{1:40}^*)$.
\textbf{Right:} The mean and $95\%$ credible interval of the posterior predictive distribution $p(x^1(t) \mid y_{1:40}^*)$.}
\label{fig:pendulum_quantification_result_dim1}

    \vspace{1em}

    \begin{subfigure}{\textwidth}
        \centering
        \begin{minipage}{0.5\textwidth}
            \centering
            \includegraphics[width=\linewidth]{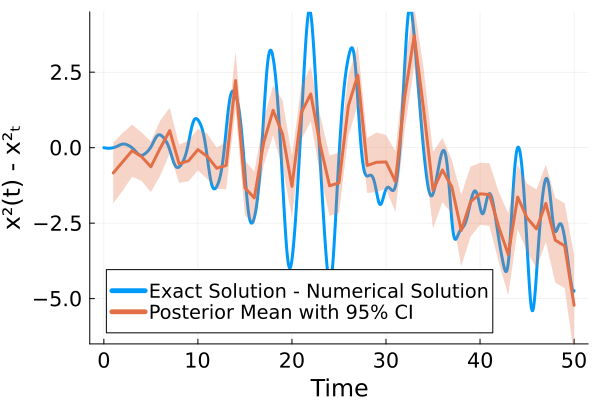}
        \end{minipage}\hfill
        \begin{minipage}{0.5\textwidth}
            \centering
            \includegraphics[width=\linewidth]{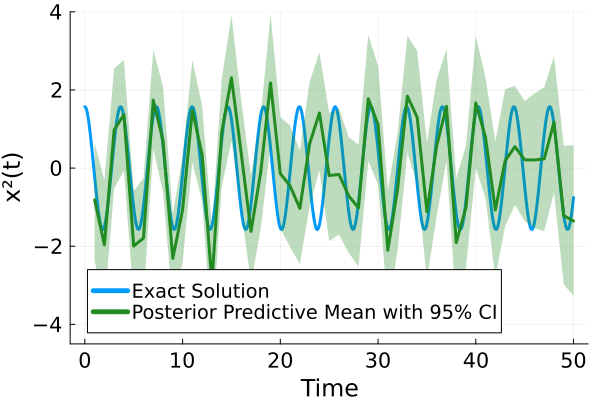}
        \end{minipage}
         \end{subfigure}
         \vspace{-3mm}
       \caption{Discretization error quantification results for $x^2(t)$ in the pendulum system.
\textbf{Left:} The mean and $95\%$ credible interval of the posterior $p(\mu_{t_i}^2 \mid y_{1:40}^*)$.
\textbf{Right:} The mean and $95\%$ credible interval of the posterior predictive distribution $p(x^2(t) \mid y_{1:40}^*)$.}
\label{fig:pendulum_quantification_result_dim2}
\vspace{-3mm}
\end{figure*}

\begin{remark}
The same set of conditions and the corresponding convergence rate were established in 
\cite{toyota2025joint} (Theorem~4.1) for the discretization error \emph{variance}. 
Theorem~\ref{theo_convergence_rate_revised} demonstrates that the analogous asymptotic property 
derived in~\cite{toyota2025joint} extends to the discretization error \emph{mean} setting 
considered in the present study. 
We also note that the nonnegativity assumption on $M$ imposed in~\cite{toyota2025joint} 
is relaxed here, thereby allowing for a broader class of prior constructions.
\end{remark}

\section{Experiment}

We evaluate the proposed method on the pendulum system and the FitzHugh--Nagumo model, with an additional Lorenz-96 experiment presented in Appendix~\ref{appendix_Lorenz96}.
The ODEs are solved using the explicit Euler method, which is of order one, and the associated discretization errors are evaluated. 
For local error estimation, we employ the Runge method, which is 
of order two\footnote{Depending on the baseline solver, using a solver of the same or lower order may be sufficient. The key requirement is to compare numerical solutions from two different solvers.}. 
A distribution $M_{i, j} \sim P$ in the proposed prior is assumed to be  $m_{i, j}  I$, where 
$m_{i, j} $ 
follows a 
univariate normal distribution $\mathcal{N}(\alpha, \beta^2)$.  The covariance matrix $\Sigma$ representing the discretization error \eqref{eq:discrizatization_error} is assumed to be $\gamma  I$, where $\gamma \in \mathbb{R}$. 
These parameters $(\alpha, \beta, \gamma) \in \mathbb{R}^3$ are selected by maximizing the marginal log-likelihood
$
\log p(y^*_{1:N} \mid \alpha, \beta, \gamma).
$
The covariance matrix of the observation process \eqref{eq:observation} is fixed to
$
\Gamma = \mathrm{diag}(1.0^2, 1.0^2),
$
and the ensemble size $N_e$ and the smoothing lag $L$ are $100$ and $10$, respectively. We assess the proposed method with a single observation trajectory, comparing the inferred discretization errors with the exact ones (more precisely, the error between a highly accurate reference solution and the numerical solution).

\subsection{Pendulum System}
We consider the second order ODE
    $\frac{d x^2(t)}{dt^2} = - \frac{g}{l} \sin x (t).$
In this section, we solve the equivalent first order system
\begin{equation}
\frac{d}{dt}
\left[
\begin{array}{c}
x^1 (t) \\
x^2 (t)
\end{array}
\right]
= \left[
\begin{array}{c}
x^2 (t) \\
- \frac{g}{l}  \sin (x^1 (t))
\end{array}
\right],
\end{equation}
where $l$ is fixed to $3.0$. 
We use the Euler method with a step size of $0.05$ to numerically solve the system. 
The observation operator $H$ is defined as
\[
H =
\begin{bmatrix}
1.0 & 2.0 \\
2.0 & 1.0
\end{bmatrix}.
\]
This operator is not one that is typically used in practice; rather, it was introduced to examine the performance of the method beyond the diagonal case.
We assume that observations are available at $t = 1, 2, \ldots, 40$, with a fixed covariance matrix 
$\Gamma = \operatorname{diag}(1.0^2,\, 1.0^2)$. We also set $p(\mu_{t_0})$ to be a 2-dimensional standard normal distribution,
$
p(\mu_{t_0}) = \mathcal{N}(0, I).
$

Table~\ref{table:hypara_pendulum} lists the top $10$ parameters
$(\alpha, \beta, \gamma)$ with corresponding log-likelihoods,
among the candidates $\alpha \in \{-1.4, -1.2, \dots, 1.4\}$, $\beta \in \{0.05, 0.10, \dots, 0.5\}$ and $\gamma \in \{0.5, 1.0, \dots, 3.0\}$.
We observe that 
high-likelihood parameter combinations tend to have $\alpha = 1$, 
with a few cases at $\alpha = -1$. This is consistent with the theoretical implication of
Theorem~\ref{theo_convergence_rate_revised}. The standard-deviation parameter $\beta$ ranges from $0.25$ to $0.40$, while $\gamma$ takes $0.50$ and $1.00$. In the following numerical experiments, we use the choice $(\alpha, \beta, \gamma) = (1.0, 0.3,0.5)$, which achieved the highest log-likelihood among all candidates.

\begin{table}[t]
  \centering
  \setlength{\tabcolsep}{7pt}
  \caption{Top 10 hyperparameters $(\alpha, \beta, \gamma)$ with corresponding 
  log-likelihoods $\log p(y^*_{1:40} \mid \alpha, \beta, \gamma)$ in the pendulum system.}\label{table:hypara_pendulum}
  \begin{tabular}{ccc}
    \toprule
    Rank & $(\alpha,\beta,\gamma)$ & Log-likelihood \\
    \midrule
     1  & $(1.000,\ 0.300,\ 0.500)$ & $-231.731995$ \\
     2  & $(1.000,\ 0.350,\ 0.500)$ & $-233.595016$ \\
     3  & $(1.000,\ 0.250,\ 0.500)$ & $-237.323069$ \\
     4  & $(1.000,\ 0.400,\ 0.500)$ & $-241.765176$ \\
     5  & $(1.000,\ 0.350,\ 1.000)$ & $-241.830257$ \\
     6  & $(1.000,\ 0.300,\ 1.000)$ & $-244.841849$ \\
     7  & $(-1.000,\ 0.400,\ 0.500)$ & $-245.983714$ \\
     8  & $(1.000,\ 0.250,\ 1.500)$ & $-246.298006$ \\
     9  & $(1.000,\ 0.400,\ 1.000)$ & $-247.887706$ \\
    10  & $(-1.000,\ 0.350,\ 0.500)$ & $-248.311574$ \\
    \bottomrule
  \end{tabular}
  \vspace{-3mm}
\end{table}

The left panels in Figures~\ref{fig:pendulum_quantification_result_dim1} 
and \ref{fig:pendulum_quantification_result_dim2} show the estimated posterior 
on $\mu_{t_i}$ with the true discretization error. 
We confirm that the proposed method successfully captures the true discretization error. 
The right panels in Figures~\ref{fig:pendulum_quantification_result_dim1} 
and \ref{fig:pendulum_quantification_result_dim2} show the posterior predictive mean and the $95\%$ credible intervals (CIs), computed from $100$ samples drawn from the posterior predictive distribution $p(x(t) \mid y_{1:40}^*)$.
Specifically, we randomly draw samples $\hat{\mu}_{1:40}$ from the  posterior 
$p(\mu_{1:40} \mid y_{1:40}^*)$, and then generate samples of the discretization error as
$
r_{t_i} \sim \mathcal{N}(\hat{\mu}_{t_i}, \Sigma),
$
following the definition of the discretization error mean in \eqref{eq:discrizatization_error}. 
Unlike in the left panels, we add the numerical solution $x_{t_i}$ to $r_{t_i}$ 
and compare $x_{t_i} + r_{t_i}$ with the exact solution  $x(t_i)$.
We observe that, while the estimated solution occasionally deviates from the exact solution, particularly in the second dimension around $t = 20\text{--}30$ and  $t = 40\text{--}50$,  the resulting trajectory follows the exact solution, indicating that the proposed method effectively reconstructs it.

\begin{figure*}[t]
    \centering

    \begin{subfigure}{\textwidth}
        \centering
        \begin{minipage}{0.5\textwidth}
            \centering
            \includegraphics[width=\linewidth]{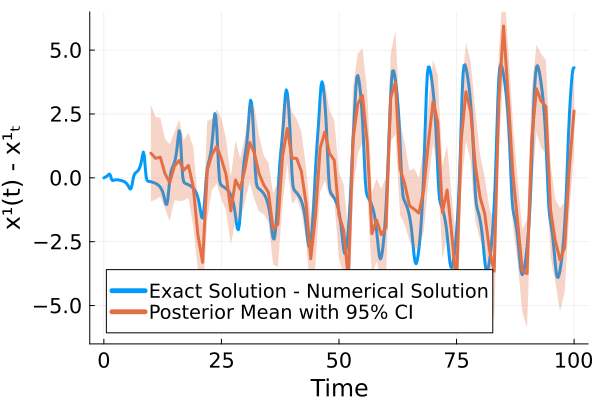}
        \end{minipage}\hfill
        \begin{minipage}{0.5\textwidth}
            \centering
            \includegraphics[width=\linewidth]{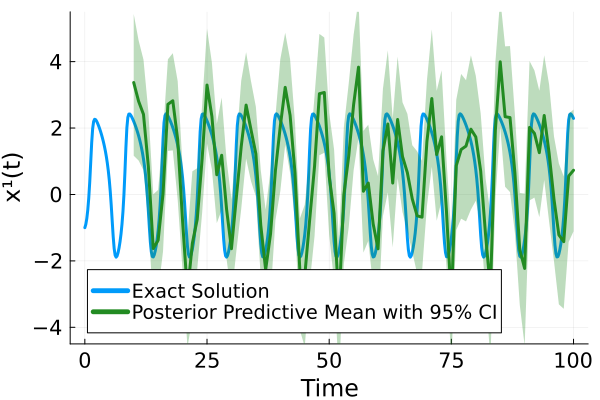}
        \end{minipage}
    \end{subfigure}
    \vspace{-4mm}
\caption{Discretization error quantification results for $x^1(t)$ in the FN model.
\textbf{Left:} The mean and $95\%$ credible interval of the posterior distribution $p(\mu_{t_i}^1 \mid y_{10:50}^*)$.
\textbf{Right:} The mean and $95\%$ credible interval of the posterior predictive distribution $p(x^1(t) \mid y_{10:50}^*)$.}\label{fig:FN_quantification_result_dim1}

    \begin{subfigure}{\textwidth}
        \centering
        \begin{minipage}{0.5\textwidth}
            \centering
            \includegraphics[width=\linewidth]{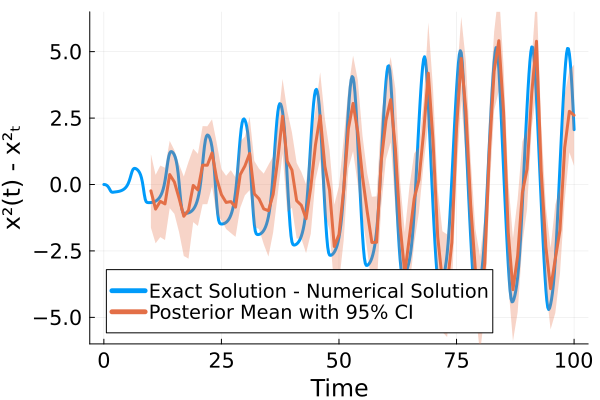}
        \end{minipage}\hfill
        \begin{minipage}{0.5\textwidth}
            \centering
            \includegraphics[width=\linewidth]{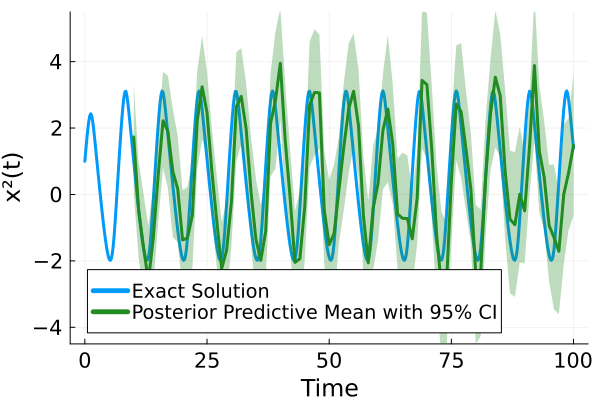}
        \end{minipage}
    \end{subfigure}
    \vspace{-3mm}
\caption{Discretization error quantification results for $x^2(t)$ in the FN model.
\textbf{Left:} The mean and $95\%$ credible interval of the posterior distribution $p(\mu_{t_i}^2 \mid y_{10:50}^*)$.
\textbf{Right:} The mean and $95\%$ credible interval of the posterior predictive distribution $p(x^2(t) \mid y_{10:50}^*)$.}
\label{fig:FN_quantification_result_dim2}
\vspace{-6mm}
\end{figure*}

\subsection{FitzHugh-Nagumo model}
We consider the FitzHugh-Nagumo model given by 
\begin{equation*}
\frac{d}{dt}
\left[
\begin{array}{l}
x^1(t) \\
x^2(t)
\end{array}
\right]
=
\left[
\begin{array}{l}
c  \biggl( \rule{0pt}{2.5ex} x^1(t) - \dfrac{\left(x^1(t) \right)^3}{3} + x^2(t) \biggr) \\[2.ex]
\rule{0pt}{2.5ex} -\dfrac{1}{c} \biggl( x^1\left(t \right) - a + b x^2\left(t \right) \biggr)
\end{array}
\right]
\end{equation*}
with $(a, b, c)=(0.5, -0.2, 1.0)$.
We solve the equation using the Euler method with a step size of $0.2$. 
The observation operator is set to $H = \mathrm{diag}(3.0, 3.0)$, and observations are available at $t = 10, \ldots, 50$. 
We set $p(\mu_{t_0})$ to be a 
two dimensional
normal distribution,
$
p(\mu_{t_0}) = \mathcal{N}(0, I).
$ 

Table~\ref{table:hypara_FN} lists the top $10$ hyperparameters $(\alpha, \beta, \gamma)$ and their corresponding log-likelihoods among the  candidates
$\alpha \in \{-1.4, -1.2, \dots, 1.4\}$, $\beta \in \{0.1, 0.2, \dots, 1.0\}$, and $\gamma \in \{0.5, 1.0, \dots, 4.0\}. $
We observe that all selected values of $\alpha$ are $1.0$, consistent with the tendency observed in the pendulum system.
The standard-deviation parameter $\beta$ takes values $0.3$ and $0.4$, while no clear tendency is observed for $\gamma$.
Henceforth, we adopt $(\alpha, \beta, \gamma) = (1.0, 0.3, 0.4)$, which achieved the highest log-likelihood among all candidates.

The left panels of Figs.~\ref{fig:FN_quantification_result_dim1} and \ref{fig:FN_quantification_result_dim2} show the estimated posterior distributions in comparison with the true discretization errors. 
We confirm that the proposed method generally captures the trajectory of the true discretization errors. 
The right panels of the same figures present the posterior predictive distributions of the exact solutions. 
The resulting trajectories match the exact solutions, indicating that the proposed method effectively corrects the numerical solutions toward the exact ones.

\begin{table}[t]
  \centering
  \setlength{\tabcolsep}{7pt}
  \caption{Top 10 parameter triplets $(\alpha, \beta, \gamma)$ with corresponding 
  log-likelihoods $\log p(y^*_{10:50} \mid \alpha, \beta, \gamma)$ 
  for the FN model.}\label{table:hypara_FN}
  \begin{tabular}{ccc}
    \toprule
    Rank & $(\alpha,\beta,\gamma)$ & Log-likelihood \\
    \midrule
     1  & $(1.000,\ 0.300,\ 4.000)$ & $-556.734585$ \\
     2  & $(1.000,\ 0.300,\ 2.500)$ & $-557.755289$ \\
     3  & $(1.000,\ 0.300,\ 1.500)$ & $-558.155779$ \\
     4  & $(1.000,\ 0.300,\ 3.500)$ & $-558.311221$ \\
     5  & $(1.000,\ 0.300,\ 2.000)$ & $-558.649372$ \\
     6  & $(1.000,\ 0.300,\ 1.000)$ & $-559.566540$ \\
     7  & $(1.000,\ 0.300,\ 3.000)$ & $-561.452090$ \\
     8  & $(1.000,\ 0.300,\ 0.500)$ & $-564.933778$ \\
     9  & $(1.000,\ 0.400,\ 4.000)$ & $-568.680713$ \\
    10  & $(1.000,\ 0.400,\ 3.500)$ & $-571.341663$ \\
    \bottomrule
  \end{tabular}
  \vspace{-2mm}
\end{table}

\section{Conclusion}
We propose a Bayesian framework to quantify discretization errors in ODEs through the \emph{discretization error mean}. 
Our method enables inference of both the magnitudes and directions of the errors, allowing the numerical solution to be adjusted toward the exact solution.
By introducing a Markov prior on the  error mean motivated by classical error analysis, we perform inference using the Ensemble Kalman Filter. 
Numerical experiments demonstrate that the proposed method effectively captures discretization errors. Our approach relies on the assumption that the underlying model is consistent with the observations; handling model misspecification is an important  further work. 
Jointly modeling the mean and variance of discretization errors would be another interesting  extension, allowing for a more flexible representation of numerical uncertainty.

\section*{Acknowledgements}
We thank Dr.~Shin'ya Nakano and Dr.~Keisuke Yano at the Institute of Statistical Mathematics for valuable discussions. In particular, we thank Dr.~Shin'ya Nakano for insightful comments regarding the marginal likelihood computation for the EnKF.
This work is supported by JSPS KAKENHI Grant Numbers 24K02951, 24K00540, 25H00449, 24K20750, 25K21806, JP25H01454 and JST ACT-X, Japan, Grant Number JPMJAX25CH. Miyatake is also supported by JST (Moonshot R\&D Program) Japan Grant Number JPMJMS24A3 for the modeling and analysis parts.

\bibliography{references}

\clearpage
\onecolumn

\appendix

\begin{center}
    {\LARGE Supplementary Materials to \\
    ``Bayesian Inference of Discretization Error Means in ODEs via Ensemble Kalman Filtering''}
\end{center}
\vspace*{.5cm}

\section{Proof of Theorem \ref{eq:filtering_correspondance}}\label{appendix:filtering}
We prove Theorem \ref{eq:filtering_correspondance}. This result is essentially established in the classical EnKF \citep{burgers1998analysis, evensen2009data}, with the only difference being the non-zero mean in the Gaussian observation process. 

We first state two lemmas for the proof. 
\begin{lemm}[Woodbury Matrix Identity \citep{golub2013matrix}]\label{lem:woodbury}
Let $A \in \mathbb{R}^{n \times n}$ be an invertible matrix,
$U \in \mathbb{R}^{n \times k}$,
$C \in \mathbb{R}^{k \times k}$,
and $V \in \mathbb{R}^{k \times n}$.
Assume that $C$ is invertible and that
$
C^{-1} + V A^{-1} U
$
is invertible.
Then we have:
\begin{align*}
(A + U C V)^{-1}& =
A^{-1}
-
A^{-1} U
\left(
C^{-1} + V A^{-1} U
\right)^{-1}
V A^{-1}.
\end{align*}
\end{lemm}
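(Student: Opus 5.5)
The plan is to verify the identity directly by multiplication. Write $S := C^{-1} + V A^{-1} U \in \mathbb{R}^{k\times k}$, which is invertible by assumption, and set
\[
B := A^{-1} - A^{-1} U S^{-1} V A^{-1}.
\]
We will show that $(A + UCV) B = I_n$. Since both matrices are square $n\times n$, a one-sided inverse is automatically two-sided. This single computation therefore gives two things at once: it shows that $A+UCV$ is invertible (which the statement implicitly asserts), and it shows that its inverse equals $B$.

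The key step is to expand the product and regroup:
\[
(A+UCV)B = I_n + UCVA^{-1} - U S^{-1} V A^{-1} - UCVA^{-1} U S^{-1} V A^{-1}.
\]
The only trick is to rewrite the two terms containing $S^{-1}$ with the common left factor $UC$. For the first of them, insert $C C^{-1}$ to get
\[
U S^{-1} = U C\, C^{-1} S^{-1}.
\]
The second already has the form $UC\,(VA^{-1}U) S^{-1}$. Adding them gives
\[
U S^{-1} V A^{-1} + UCVA^{-1}U S^{-1} V A^{-1} = UC\,(C^{-1} + VA^{-1}U)\, S^{-1} V A^{-1} = UC\, S\, S^{-1} V A^{-1} = UCVA^{-1}.
\]
This cancels the term $+UCVA^{-1}$, leaving $(A+UCV)B = I_n$.

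No step is a genuine obstacle. The only point requiring care is the factorization through $C C^{-1}$, which is exactly where the invertibility of $C$ is used. Invertibility of $A$ is needed for $B$ to be defined, and invertibility of $S$ is needed for $S^{-1}$ to make sense. As an optional sanity check, one could also verify $B(A+UCV) = I_n$ by the symmetric computation, factoring $CV$ on the right instead, although the square-matrix argument makes this unnecessary.
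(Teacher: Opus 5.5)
Your verification is correct. Expanding $(A+UCV)B$ and writing $U = UC\,C^{-1}$ merges the two $S^{-1}$ terms into $UC\,S\,S^{-1}VA^{-1} = UCVA^{-1}$. Combined with the fact that a one-sided inverse of a square matrix is two-sided, this gives both the invertibility of $A+UCV$ and the formula.

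The paper gives no proof of this lemma; it only cites Golub and Van Loan, so there is no competing argument to compare against. What your route adds is self-containment, plus two points the citation leaves implicit:
\begin{itemize}
\item The statement never assumes $A+UCV$ is invertible, and your one-sided computation supplies that.
\item Your computation handles a general invertible $C$ directly. The textbook identity is often stated with $C=I$, and from that form the present version follows only after absorbing $C$ into $U$ and using $C(I+VA^{-1}UC)^{-1} = (C^{-1}+VA^{-1}U)^{-1}$.
\end{itemize}
Your hypotheses are also exactly what the paper needs. In its application ($A=\Sigma_1^{-1}$, $U=H^\top$, $C=\Sigma_2^{-1}$, $V=H$), $S = \Sigma_2 + H\Sigma_1H^\top$ is positive definite and hence invertible.
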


\begin{lemm}\label{eq:Bayes_Gauus}
Let $x \sim  \mathcal{N}(\mu_1, \Sigma_1)$,
and suppose that $p(y|x)$ is given by
\begin{equation*}
    y = H x + \e
\end{equation*}
where $\e \sim \mathcal{N}(\mu_2, \Sigma_2)$ is independent of $x$. Then, $p(x|y)$ is given by 
\begin{equation*}
    \mathcal{N}\left(\mu_1 + K\left(y - H\mu_1 - \mu_2\right), \left(I - K H\right)\Sigma_1 \right),
\end{equation*}
where 
\begin{equation*}
    K:= \Sigma_1 H^{\top} (H  \Sigma_1 H^{\top} + \Sigma_2)^{-1}.
\end{equation*}
\end{lemm}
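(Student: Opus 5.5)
The plan is to prove the lemma by writing down the joint Gaussian law of $(x,y)$ and then applying the standard Gaussian conditioning formula; Lemma~\ref{lem:woodbury} is not needed here. Since $x \sim \mathcal{N}(\mu_1,\Sigma_1)$ and $\varepsilon \sim \mathcal{N}(\mu_2,\Sigma_2)$ are independent, the vector $(x, y) = (x, Hx+\varepsilon)$ is a linear image of the Gaussian vector $(x,\varepsilon)$. It is therefore jointly Gaussian with mean $(\mu_1,\, H\mu_1+\mu_2)$. The covariance blocks follow directly from bilinearity of covariance and $\mathrm{Cov}(x,\varepsilon)=0$:
\begin{align*}
\mathrm{Cov}(x,x)&=\Sigma_1, \qquad \mathrm{Cov}(x,y)=\Sigma_1H^\top, \\
\mathrm{Cov}(y,y)&=H\Sigma_1H^\top+\Sigma_2 .
\end{align*}

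Next I will apply the conditioning formula for a partitioned Gaussian. If $(x,y)$ is Gaussian with blocks $\Sigma_{xx},\Sigma_{xy},\Sigma_{yy}$ and $\Sigma_{yy}$ is invertible, then
\[
x\mid y \sim \mathcal{N}\bigl(m_x+\Sigma_{xy}\Sigma_{yy}^{-1}(y-m_y),\ \Sigma_{xx}-\Sigma_{xy}\Sigma_{yy}^{-1}\Sigma_{yx}\bigr).
\]
To keep the argument self-contained, I will derive this rather than cite it. Set $z := x - Ky$ with $K=\Sigma_{xy}\Sigma_{yy}^{-1}$. Then $\mathrm{Cov}(z,y)=\Sigma_{xy}-K\Sigma_{yy}=0$, so $z$ and $y$ are jointly Gaussian and uncorrelated, hence independent. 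Writing $x = z + Ky$ then gives the conditional law of $x$ given $y$, with mean $\mathbb{E}z + Ky$ and covariance $\mathrm{Cov}(z)$.

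Substituting the blocks from the first step gives $K=\Sigma_1H^\top(H\Sigma_1H^\top+\Sigma_2)^{-1}$. The conditional mean becomes $\mu_1+K(y-H\mu_1-\mu_2)$, and the conditional covariance is $\Sigma_1-KH\Sigma_1=(I-KH)\Sigma_1$. This is exactly the claimed posterior.

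The only real obstacle is the invertibility of $H\Sigma_1H^\top+\Sigma_2$. It holds when $\Sigma_2$ is positive definite, because $H\Sigma_1H^\top$ is positive semidefinite and the sum is then positive definite. In the paper's application $\Sigma_2=H\Sigma H^\top+\Gamma$ with $\Gamma\succ0$, so the condition is satisfied, and I will state it as an implicit standing assumption. Compared with the classical zero-mean case, the nonzero noise mean $\mu_2$ only shifts $m_y$ to $H\mu_1+\mu_2$, and this shift carries through the conditioning formula unchanged.
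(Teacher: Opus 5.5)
Your argument is correct, but it takes a different route from the paper's. The paper works with densities. It writes $p(x\mid y)\propto p(y\mid x)\,p(x)$ and completes the square in $x$ to get the information form: posterior precision $\Sigma_1^{-1}+H^\top\Sigma_2^{-1}H$ and mean $(\Sigma_1^{-1}+H^\top\Sigma_2^{-1}H)^{-1}(H^\top\Sigma_2^{-1}(y-\mu_2)+\Sigma_1^{-1}\mu_1)$. It then converts this to the gain form using the Woodbury identity (Lemma~\ref{lem:woodbury}) together with the separate identity $(I-KH)\Sigma_1H^\top\Sigma_2^{-1}=K$.

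You instead build the joint Gaussian law of $(x,y)$ and condition through the decorrelation $z=x-Ky$. This lands directly in the gain form with no matrix inversion lemma, and it is shorter and slightly more general. The density computation tacitly needs both $\Sigma_1$ and $\Sigma_2$ to be invertible, whereas you only need $H\Sigma_1H^\top+\Sigma_2$ invertible, so a singular prior covariance is covered.

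Your decomposition also makes the perturbed-observation update transparent. Set $\tilde y^n:=H(x_{t_{i+1}}+\hat\mu^n_{i+1\mid i})-\hat w^n_{i+1}$. Then the pair $(\hat\mu^n_{i+1\mid i},\tilde y^n)$ has the joint law of $(x,y)$ in the lemma, with $\mu_1=\mu_{i+1\mid i}$, $\Sigma_1=\Sigma_{i+1\mid i}$, $\mu_2=Hx_{t_{i+1}}$ and $\Sigma_2=H\Sigma H^\top+\Gamma$. The update \eqref{eq:filter_ensemble} then reads $(\hat\mu^n_{i+1\mid i}-\hat K_{i+1\mid i}\tilde y^n)+\hat K_{i+1\mid i}y^*_{t_{i+1}}$. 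With the population gain in place of $\hat K_{i+1\mid i}$, the bracket is exactly your $z$, so each updated member would be an exact posterior draw. This gives a cleaner view of why Theorem~\ref{eq:filtering_correspondance} holds.

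In exchange, the paper's route gives an explicit Bayes'-rule derivation that matches its Bayesian framing, and it yields the information form of the posterior as a by-product.
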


\begin{proof}
By Bayes' rule, we have 
    \begin{align*}
        p(x|y) &\propto p(y|x)  p(x)   \\
        &\propto \exp\Big\{
-\frac12
\big[
(y - Hx - \mu_2)^\top
\Sigma_2^{-1}
(y - Hx - \mu_2)
+
(x - \mu_1)^\top
\Sigma_1^{-1}
(x - \mu_1)
\big]
\Big\}.
\end{align*}

Expanding the quadratic form in $x$, we have 
\begin{align*}
&(y - Hx - \mu_2)^{\top} \Sigma_2^{-1} (y - Hx - \mu_2)
+ (x - \mu_1)^{\top} \Sigma_1^{-1} (x - \mu_1) \\
&=
\big((y-\mu_2) - Hx\big)^{\top}
\Sigma_2^{-1}
\big((y-\mu_2) - Hx\big)
+
(x - \mu_1)^{\top} \Sigma_1^{-1} (x - \mu_1) \\
&=
x^{\top} H^{\top} \Sigma_2^{-1} H x
-2 (y-\mu_2)^{\top} \Sigma_2^{-1} H x
+ (x - \mu_1)^{\top} \Sigma_1^{-1} (x - \mu_1)
+ \text{const} \\
&=
x^{\top}(\Sigma_1^{-1} + H^{\top} \Sigma_2^{-1} H)x
-2\big(
H^{\top} \Sigma_2^{-1}(y-\mu_2)
+
\Sigma_1^{-1}\mu_1
\big)^{\top} x
+ \text{const} \\
&=
\Big(
x -
(\Sigma_1^{-1} + H^{\top} \Sigma_2^{-1} H)^{-1}
\big(
H^{\top} \Sigma_2^{-1}(y-\mu_2)
+
\Sigma_1^{-1}\mu_1
\big)
\Big)^{\top} \\
&\qquad 
(\Sigma_1^{-1} + H^{\top} \Sigma_2^{-1} H) \\
&\qquad
\Big(
x -
(\Sigma_1^{-1} + H^{\top} \Sigma_2^{-1} H)^{-1}
\big(
H^{\top} \Sigma_2^{-1}(y-\mu_2)
+
\Sigma_1^{-1}\mu_1
\big)
\Big)
+ \text{const}.
\end{align*}
    Here, the term “const" represents a quantity that does not depend on $x$. 
    Therefore the posterior mean and covariance matrix are given by 
    \begin{equation}\label{eq:posteiror_means}
    (\Sigma_1^{-1}  + H^{\top} \Sigma_2^{-1}  H )^{-1}(H^{\top} \Sigma_2^{-1} (y - \mu_2) + \Sigma_1^{-1} \mu_1),       
    \end{equation}
    and    \begin{equation}\label{eq:posteiror_variances}
    (\Sigma_1^{-1}  + H^{\top} \Sigma_2^{-1}  H )^{-1}      
    \end{equation}
    respectively. 
    By applying Lemma \ref{lem:woodbury} with $A= \Sigma_1^{-1}$, $U= H^{\top}$, $C= \Sigma_2^{-1}$ and $V= H$, \eqref{eq:posteiror_variances} can be written as 
    \begin{align*}
      \eqref{eq:posteiror_variances} &= \Sigma_1 - \Sigma_1 H^{\top}(\Sigma_2 + H \Sigma_1 H^{\top})^{-1}H \Sigma_{1} \\
      & =\Sigma_1 - KH \Sigma_1 =(I - KH) \Sigma_{1}.
    \end{align*}
    Here, $K$ is defined by $K:= \Sigma_1 H^{\top}(\Sigma_2 + H \Sigma_1 H^{\top})^{-1}$. Moreover, applying Lemma \ref{lem:woodbury} once again, we obtain
\begin{align}
\eqref{eq:posteiror_means}
&=
(\Sigma_1 - \Sigma_1 H^{\top}
(H \Sigma_1 H^{\top} + \Sigma_2)^{-1}
H \Sigma_1)
\big(
H^{\top} \Sigma_2^{-1} (y - \mu_2)
+
\Sigma_1^{-1} \mu_1
\big)
\notag\\
&=
(\Sigma_1 - KH \Sigma_1)
\big(
H^{\top} \Sigma_2^{-1} (y - \mu_2)
+
\Sigma_1^{-1} \mu_1
\big)
\notag\\
&=
\mu_1
-
KH\mu_1
+
(\Sigma_1 - KH\Sigma_1)
H^{\top} \Sigma_2^{-1} (y - \mu_2).
\label{eq:posterior_mean_arranged}
\end{align}
We note that
\begin{align*}
(\Sigma_1 - KH\Sigma_1) H^{\top} \Sigma_2^{-1}
&=
\Sigma_1 H^{\top} \Sigma_2^{-1}
-
\Sigma_1 H^{\top}
(H \Sigma_1 H^{\top} + \Sigma_2)^{-1}
H \Sigma_1 H^{\top} \Sigma_2^{-1} \\
&=
\Sigma_1 H^{\top}
\Big[
\Sigma_2^{-1}
-
(H \Sigma_1 H^{\top} + \Sigma_2)^{-1}
H \Sigma_1 H^{\top} \Sigma_2^{-1}
\Big].
\end{align*}

Using the identity
\begin{align*}
\Sigma_2^{-1}
-
(H \Sigma_1 H^{\top} + \Sigma_2)^{-1}
H \Sigma_1 H^{\top} \Sigma_2^{-1}
&= (H \Sigma_1 H^{\top} + \Sigma_2)^{-1} \left( \left(H \Sigma_1 H^{\top} + \Sigma_2 \right) \Sigma_2^{-1} - H \Sigma_1 H^{\top} \Sigma_2^{-1} \right)  \\
&= (H \Sigma_1 H^{\top} + \Sigma_2)^{-1}  I = (H \Sigma_1 H^{\top} + \Sigma_2)^{-1},
\end{align*}
we obtain
\[
(\Sigma_1 - KH\Sigma_1) H^{\top} \Sigma_2^{-1}
=
\Sigma_1 H^{\top}
(H \Sigma_1 H^{\top} + \Sigma_2)^{-1}
=
K.
\]

Therefore,
\begin{align*}
\eqref{eq:posterior_mean_arranged}
&=
\mu_1
-
KH\mu_1
+
K(y-\mu_2) =
\mu_1 + K(y - H\mu_1 - \mu_2).
\end{align*}

This concludes the proof.
\end{proof}

\paragraph{Proof of Theorem \ref{eq:filtering_correspondance}} We first prove 
\begin{equation}\label{eq:convergence_firstmomment_filter}
\hat{\mu}_{i+1|i+1} \xrightarrow{P} {\mu}_{i+1|i+1}.
\end{equation}
By the definition of $\hat{\mu}_{i+1|i+1}$, we have
\begin{align}
    \hat{\mu}_{i+1 \mid i+1}  &=  \frac{1}{N_e} \sum_{n=1}^{N_e} \hat{\mu}_{i+1 \mid i+1}^n  = \frac{1}{N_e}\sum_{n=1}^{N_e}\hat{\mu}_{i+1 \mid i}^n  +\frac{1}{N_e}\sum_{n=1}^{N_e} \hat{K}_{i+1|i} \left( y_{t_{i+1}}^* + \hat{w}^n_{i+1} - H\left(x_{t_{i+1}} + \hat{\mu}_{i+1 \mid i}^n \right) \right) \notag \\
& = \hat{\mu}_{i+1 \mid i } +  \hat{K}_{i+1|i} \left( y_{t_{i+1}}^* + \hat{w}_{i+1} - H\left(x_{t_{i+1}} + \hat{\mu}_{i+1 \mid i} \right) \right), \label{eq:proof_consistency_filter}
\end{align}
where 
\begin{align*}
\hat{w}_{i+1} := \frac{1}{N_e} \sum_{n=1}^{N_e} \hat{w}_{i+1}^n, \quad
\hat{\mu}_{i+1|i} = \frac{1}{N_e} \sum_{n=1}^{N_e} \hat{\mu}_{i+1|i}^n.
\end{align*}
Since $\hat{w}_{i+1}^n \sim \mathcal{N}(0, H\Sigma H^{\top}+ \Gamma)$, we have $\hat{w}_{i+1}\xrightarrow{P} 0$.
Therefore, it follows that
\begin{align}  &\text{\eqref{eq:proof_consistency_filter}} \xrightarrow{P} {\mu}_{i+1 \mid i} +  {K}_{i+1|i} \left( y_{t_{i+1}}^*  - H\left(x_{t_{i+1}} + {\mu}_{i+1 \mid i} \right) \right), \label{eq:converge_ensemble}
\end{align}
where
\begin{equation}
    K_{i+1|i}:= {\Sigma}_{i+1 \mid i}  H^{\top} 
\left(
H  {\Sigma}_{i+1 \mid i}  H^{\top} + H\Sigma H^{\top} + \Gamma
\right)^{-1}. \label{eq:population_KalmanGain}
\end{equation} 
By Lemma \ref{eq:Bayes_Gauus}, \eqref{eq:converge_ensemble} also coincides with $\mu_{i+1|i+1}$, which proves \eqref{eq:convergence_firstmomment_filter}.

We next prove 
\begin{equation*}
\hat{\Sigma}_{i+1|i+1} \xrightarrow{P} {\Sigma}_{i+1|i+1}.
\end{equation*}
We first 
rewrite
$\hat{\mu}_{i+1 \mid i+1}$ as
\begin{align}
\hat{\mu}_{i+1 \mid i+1}^n - \hat{\mu}_{i+1 \mid i+1}  &= \hat{\mu}_{i+1 \mid i}^n +  \hat{K}_{i+1|i} \left( y_{t_{i+1}}^* + \hat{w}_{i+1}^n - H\left(x_{t_{i+1}} + \hat{\mu}_{i+1 \mid i}^n \right) \right) \notag \\
&\phantom{=}\quad- \hat{\mu}_{i+1 \mid i} -  \hat{K}_{i+1|i} \left( y_{t_{i+1}}^* + \hat{w}_{i+1} - H\left(x_{t_{i+1}} + \hat{\mu}_{i+1 \mid i} \right) \right) \notag \\
&=(I-\hat{K}_{i+1|i}H)(\hat{\mu}_{i+1 \mid i}^n- \hat{\mu}_{i+1 \mid i}) +  \hat{K}_{i+1|i} (\hat{w}_{i+1}^n- \hat{w}_{i+1}). \notag
\end{align}
Therefore, $\hat{\Sigma}_{i+1|i+1}$ can be written as 
\begin{align}
\hat{\Sigma}_{i+1 \mid i+1} &:=  \frac{1}{N_e-1} \sum_{n=1}^{N_e} \left( \hat{\mu}_{i+1 \mid i+1}^n - \hat{\mu}_{i+1 \mid i+1} \right) \left( \hat{\mu}_{i+1 \mid i+1}^n - \hat{\mu}_{i+1 \mid i+1} \right)^{\top}  \notag \\
&= \frac{1}{N_e-1} \sum_{n=1}^{N_e} \left\{ (I-\hat{K}_{i+1|i}H)(\hat{\mu}_{i+1 \mid i}^n- \hat{\mu}_{i+1 \mid i}) +  \hat{K}_{i+1|i} (\hat{w}^n_{i+1}- \hat{w}_{i+1}) \right\}  \notag \\
&~~~~~~~~~~~~~~~~~~~~~~~~~~~~~~~~~~~~ \left\{ (I-\hat{K}_{i+1|i}H)(\hat{\mu}_{i+1 \mid i}^n- \hat{\mu}_{i+1 \mid i}) +  \hat{K}_{i+1|i} (\hat{w}^n_{i+1}- \hat{w}_{i+1}) \right\}^{\top} \notag \\
&= \underbrace{ \frac{1}{N_e-1}\sum_{n=1}^{N_e} \{ (I-\hat{K}_{i+1|i}H) (\hat{\mu}_{i+1 \mid i}^n- \hat{\mu}_{i+1 \mid i})   (\hat{\mu}_{i+1 \mid i}^n- \hat{\mu}_{i+1 \mid i})^{\top}(I-\hat{K}_{i+1|i}H)^{\top} \}}_{\text{(A)}}   
\notag \\
&\phantom{=}\quad ~~~~~~~~~~~~~~~~~~~~~~~~~~~~~~~~~~~~~~~~~~~ + \underbrace{  \frac{1}{N_e-1}\sum_{n=1}^{N_e}  \left\{ \hat{K}_{i+1|i} (\hat{w}_{i+1}^n- \hat{w}_{i+1}) (\hat{w}_{i+1}^n - \hat{w}_{i+1})^{\top} \hat{K}^{\top}_{i+1|i} \right\}}_{\text{(B)}} + (\ast) 
\notag,
\end{align} 
where
\begin{align}
    (\ast)&= \frac{1}{N_e-1} \sum_{n=1}^{N_e} \left\{ (I-\hat{K}_{i+1|i} H)(\hat{\mu}_{i+1 \mid i}^n- \hat{\mu}_{i+1 \mid i})  (\hat{w}^n_{i+1}- \hat{w}_{i+1})^{\top}\hat{K}^{\top}_{i+1|i}  \right\} \notag \\
    &\phantom{=}\quad+ \frac{1}{N_e-1} \sum_{n=1}^{N_e} \left\{  \hat{K}_{i+1|i}  (\hat{w}^n_{i+1}- \hat{w}_{i+1}) (\hat{\mu}_{i+1 \mid i}^n- \hat{\mu}_{i+1 \mid i})^{\top} (I-\hat{K}_{i+1|i} H)^{\top}   \right\}. 
    \notag
\end{align}
Since $\hat{w}_{i+1}^n$ and $\hat{\mu}_{i+1 \mid i}^n$ are independent, 
$(\ast)$ converges in probability to the zero matrix $O$:
\begin{equation}\label{eq:prob_converge_first}
(\ast) \xrightarrow{P} O. 
\end{equation}
Since $\hat{\mu}_{i+1|i}^n$ and $\hat{w}^n_{i+1}$ are drawn i.i.d.~from $p(\mu_{t_{i+1}}|y_{1:i})$ and $\mathcal{N}(0, H \Sigma H^{\top} + \Gamma)$, we see that 
\begin{align}
    (A)  &\xrightarrow{P} (I-K_{i+1|i} H) \Sigma_{i+1|i}(I-K_{i+1|i} H)^{\top}, \label{eq:prob_converge_second}\\
    (B)  &\xrightarrow{P} K_{i+1|i} (H \Sigma H^{\top} + \Gamma )K^{\top}_{i+1|i}. \label{eq:prob_converge_third}
\end{align}
Equations \eqref{eq:prob_converge_first}, \eqref{eq:prob_converge_second}, \eqref{eq:prob_converge_third}
lead to 
\begin{align*}
    \hat{\Sigma}_{i+1|i+1} = \text{(A)} + \text{(B)} + (\ast)&\xrightarrow{P}(I-K_{i+1|i} H) \Sigma_{i+1|i}(I-K_{i+1|i}H)^{\top}+  K_{i+1|i} (H \Sigma H^{\top} + \Gamma)K^{\top}_{i+1|i} + O \\
    & = (I-K_{i+1|i} H) \Sigma_{i+1|i}(I-K_{i+1|i}H)^{\top}+  K_{i+1|i} (H \Sigma H^{\top} + \Gamma)K^{\top}_{i+1|i}.
\end{align*}

Finally, we see that 
\begin{align*}
    &(I-K_{i+1|i} H) \Sigma_{i+1|i}(I-K_{i+1|i}H)^{\top}+  K_{i+1|i} (H \Sigma H^{\top} + \Gamma)K^{\top}_{i+1|i}   \\
    &\quad= (I-K_{i+1|i} H)\Sigma_{i+1|i} -  \Sigma_{i+1|i}H^{\top}K^{\top}_{i+1|i}  + K_{i+1|i} H \Sigma_{i+1|i} H^{\top}K^{\top}_{i+1|i} + K_{i+1|i} (H \Sigma H^{\top} + \Gamma )K^{\top}_{i+1|i}  \\
    & \quad=( I-K_{i+1|i}H) \Sigma_{i+1|i} -  \Sigma_{i+1|i}H^{\top}K^{\top}_{i+1|i}  + K_{i+1|i}(H \Sigma_{i+1|i}H^{\top} +  H \Sigma H^{\top} + \Gamma )K^{\top}_{i+1|i} \\
    & \quad=( I-K_{i+1|i}H) \Sigma_{i+1|i} -  \Sigma_{i+1|i}H^{\top}K^{\top}_{i+1|i}  \\
    &\quad \phantom{=}\quad + {\Sigma}_{i+1 \mid i}  H^{\top} 
\left(
H  {\Sigma}_{i+1 \mid i}  H^{\top} + H \Sigma H^{\top} + \Gamma
\right)^{-1} (H \Sigma_{i+1|i}H^{\top} +  H \Sigma H^{\top} + \Gamma )K^{\top}_{i+1|i} \\
&\quad =( I-K_{i+1|i}H) \Sigma_{i+1|i} -  \Sigma_{i+1|i}H^{\top}K^{\top}_{i+1|i}  + {\Sigma}_{i+1 \mid i}  H^{\top} K^{\top}_{i+1|i} \\
    &\quad =( I-K_{i+1|i}H) \Sigma_{i+1|i}.
\end{align*}
Here, the third equation is derived from  the definition \eqref{eq:population_KalmanGain} of $K_{i+1|i}$.
This concludes the proof, with the use of Lemma \ref{eq:Bayes_Gauus}.

\section{Derivation of Eq.~\eqref{eq:simplified_smoothing}.}\label{appendix:smoothing}
We first note that, under the augmented state-space model, the observation operator $\boldsymbol{\tilde{H}}$ is given by
\begin{equation}
    \boldsymbol{\tilde{H}} = (H, O, \ldots, O),
\end{equation}
where $O$ denotes a $d_{\Y} \times d_{\X}$ zero matrix. Assume that we have an ensemble $\{ \hat{\boldsymbol{\mu}}_{i+1|i}^n \}_{n=1}^{N_e}
 = \left\{\left( \hat{\mu}_{i+1|i }^n, \ldots, \hat{\mu}_{i+1-L|i}^n  \right)^{\top} \right\}_{n=1}^{N_e}.
$  of $p(\boldsymbol{\mu}_{i+1} | y_{1:i})$. Let $\hat{\boldsymbol{\Sigma}}_{i+1|i}$ be a sample covariance matrix of the ensemble. Then, the 
correction step for the augmented state-space model updates the ensemble $\{ \hat{\boldsymbol{\mu}}_{i+1|i}^n \}_{n=1}^{N_e}$ by 
\begin{align} &\hat{\boldsymbol{\mu}}_{i+1 \mid i+1}^n := \notag \\
&~~~~~~~\hat{\boldsymbol{\mu}}_{i+1 \mid i}^n + \hat{\boldsymbol{\Sigma}}_{i+1 \mid i}  \boldsymbol{\tilde{H}}^{\top} 
\left(
\boldsymbol{\tilde{H}} \hat{\boldsymbol{\Sigma}}_{i+1 \mid i}  \boldsymbol{\tilde{H}}^{\top} + H \Sigma H^{\top} + \Gamma
\right)^{-1}  \left( y_{t_{i+1}}^* + \hat{w}^n_{i+1} - Hx_{t_{i+1}} - \boldsymbol{\tilde{H}}\hat{\boldsymbol{\mu}}_{i+1 \mid i}^n  \right).  \label{eq:smoothing_ensemble}
\end{align}
It follows from 
the definition of $\boldsymbol{\tilde{H}}$
that
\begin{align}
   \boldsymbol{\tilde{H}}\hat{\boldsymbol{\mu}}_{i+1 \mid i}^n &= (H, O, \ldots, O)   (\hat{\mu}_{i+1 \mid i}^n, \hat{\mu}_{i \mid i}^n,\ldots, \hat{\mu}_{i-L+1 \mid i}^n)^{\top} = H\hat{\mu}_{i+1 \mid i}^n. \label{eq:smoothing_supporting1}
\end{align}
Noting that
\begin{align*}
\hat{\boldsymbol{\Sigma}}_{i+1 \mid i}
=
\begin{pmatrix}
\hat{\Sigma}_{i+1,i+1|i} & \hat{\Sigma}_{i+1,i|i} & \cdots & \hat{\Sigma}_{i+1,i-L+1|i} \\
\hat{\Sigma}_{i,i+1|i} & \hat{\Sigma}_{i,i|i} & \cdots & \hat{\Sigma}_{i,i-L+1|i} \\
\vdots & \vdots & \ddots & \vdots \\
\hat{\Sigma}_{i-L+1,i+1|i} & \hat{\Sigma}_{i-L+1,i|i} & \cdots & \hat{\Sigma}_{i-L+1,i-L+1|i}
\end{pmatrix},
\end{align*}
where $\hat{\Sigma}_{j, k \mid i} := \frac{1}{N_e-1} \sum_{n=1}^{N_e} \left( \hat{\mu}_{j \mid i}^n - \hat{\mu}_{j \mid i} \right) \left( \hat{\mu}_{k \mid i}^n - \hat{\mu}_{k \mid i} \right)^{\top}$ denotes the sample cross-covariance between $\{ \hat{\mu}_{j \mid i}^n \}_{n=1}^{N_e}$ and
$\{ \hat{\mu}_{k \mid i}^n \}_{n=1}^{N_e}$,
we have  
\begin{align}
\hat{\boldsymbol{\Sigma}}_{i+1 \mid i}  \boldsymbol{\tilde{H}}^{\top} &= (\hat{\Sigma}_{i+1,i+1|i},\hat{\Sigma}_{i,i+1|i}, ..., \hat{\Sigma}_{i-L+1,i+1|i} )^{\top} H^{\top}, \label{eq:smoothing_supporting2}\\
   \boldsymbol{\tilde{H}}  \hat{\boldsymbol{\Sigma}}_{i+1 \mid i}  \boldsymbol{\tilde{H}}^{\top} &= H  \hat{\Sigma}_{i+1, i+1|i}  H^{\top}. \label{eq:smoothing_supporting3}
\end{align}
Applying \eqref{eq:smoothing_supporting1}, \eqref{eq:smoothing_supporting2} and \eqref{eq:smoothing_supporting3} to \eqref{eq:smoothing_ensemble} concludes the proof.

\section{Proof of Theorem \ref{theo_convergence_rate_revised}}\label{appendix_proof_prior}

In this appendix, we prove Theorem~\ref{theo_convergence_rate_revised}. 
The proof proceeds in essentially the same manner as that of Theorem~4.1 in \cite{toyota2025joint}.

While \eqref{eq:prior_stepsize} defines the Markov prior only at the observation points 
$t_0, \ldots, t_N$, the recursive construction in \eqref{eq:proposed_prior} naturally allows us 
to extend this definition to finer time grids as follows:
\begin{align*}
    p_h (\mu_{t_{0, 0}},\mu_{t_{0, 1}},...,\mu_{t_{0, k-1}},\mu_{t_{1, 0}}   \ldots, \mu_{t_{N, 0}}) 
    = p_h (\mu_{t_{0, 0}}) \prod_{i=0}^{N-1}  \prod_{j=0}^{k-1} 
    p_h (\mu_{t_{i, j+1}} \mid \mu_{t_{i, j}}). 
\end{align*}

\begin{proof}[Proof of Theorem \ref{theo_convergence_rate_revised}]
Let us recall that random variables $\{ X_h \}_{h>0}$ and $\{ Y_h \}_{h>0}$ indexed by $h$ satisfy 
$X_h  = \mathcal{O}_p (Y_h)$ if $X_h/Y_h$ is uniformly tight:
\[
\lim_{\epsilon \rightarrow \infty} 
\sup_{h}\mathbb{P}\left( \left|\frac{X_h}{Y_h} \right| \geq \epsilon \right)  = 0.
\]
By Markov inequality, we have
\begin{equation}
    \sup_{h} \mathbb{P}\left(\left\lVert \frac{\mu_{t_i} }{h^{\min({a}, {b})}} \right\rVert \geq \epsilon \right) 
    \leq \sup_{h} \frac{1}{\epsilon h^{\min({a}, {b})}} 
    \mathbb{E}_{\mu_{t_i} \sim p_h (\mu_{t_i})}[\| \mu_{t_i}  \|].
\end{equation}
Hence, it suffices to prove that 
\[
\mathbb{E}_{p_h (\mu_{t_i})}[\| \mu_{t_i}  \|] 
= \mathcal{O}(h^{\min({a}, {b})})
\]
for $i \geq 1$.

Define 
\[
e_{i, j} := \mathbb{E}_{p_h(\mu_{t_{i, j}})}[\| \mu_{t_{i, j}} \|]
\]
to simplify the notation. 
By the definition \eqref{eq:proposed_prior}, it holds that for 
$0 \leq i \leq (N -1)$ and $0 \leq j \leq k-1$, 
\begin{equation*}
\mu_{t_{i, j+1}}  = M_{i, j}  \mu_{t_{i, j}}  + L\left(t_{i, j}  \right).
\end{equation*}
This leads to the inequality 
\begin{align}
    e_{i, j+1} 
    &= \mathbb{E}_{p_h(\mu_{t_{i, j+1}})}[\| \mu_{t_{i, j+1}} \|]  \notag \\
    &= \mathbb{E}_{p_h(\mu_{t_{i, j}}), P_{\lambda}}[\| M_{i, j}  \mu_{t_{i, j}} + L(t_{i, j}) \|]\notag \\[1ex]
    &\leq \mathbb{E}_{p_h(\mu_{t_{i, j}}), P_{\lambda}}[\| M_{i, j}  \mu_{t_{i, j}}  \|] + \| L(t_{i, j}) \| \notag\\[1ex]
    &\leq \mathbb{E}_{p_h(\mu_{t_{i, j}}), P_{\lambda}}[\|M_{i, j} \|_{\text{op}}  \|\mu_{t_{i, j}}  \|] 
    +  \| L(t_{i, j}) \| \notag \\[1ex]
    &= \mathbb{E}_{P_{\lambda}}[\|M_{i, j} \|_{\text{op}} ] 
     \mathbb{E}_{p_h(\mu_{t_{i, j}})}[\|\mu_{t_{i, j}}  \|] 
    +  \| L(t_{i, j}) \|  \label{eq:convergence rate_indep} \\[1ex]
    &=  \mathbb{E}_{P_{\lambda}} [\| I - ( I  - M_{i, j} )\|_{\text{op}}] 
     e_{i, j}  +  \| L(t_{i, j}) \| \notag  \\
    &\leq \left\{ \mathbb{E}_{P_{\lambda}} [\| I \|_{\text{op}}] 
    + \mathbb{E}_{P_{\lambda}} [\|  I  - M_{i, j} \|_{\text{op}}] \right\} 
     e_{i, j} +  \| L(t_{i, j}) \| \notag   \\
    &\leq (1 +C_M h)  e_{i, j} + C_{L}  h^{{a} +1}. 
    \label{eq:convergence rate2}
\end{align}
Here, $\mathbb{E}_{p_h(\mu_{t_{i,j}}),\,P_{\lambda}}$ denotes the expectation with respect to 
$\mu_{t_{i,j}} \sim p_h(\mu_{t_{i,j}})$ and $M_{i,j} \sim P_{\lambda}$. The equality \eqref{eq:convergence rate_indep} is derived by Condition (IV). The inequality \eqref{eq:convergence rate2} is derived from Conditions 
(I) and (II). 
Applying \eqref{eq:convergence rate2} recursively yields
\begin{align}
    e_{i, j+1} 
    &\leq (1+C_M h)^{j+1+ik}  e_{0, 0} 
    + C_L h^{a +1} 
      \sum_{l=1}^{j+ik+1} (1+C_M h)^{l-1}\notag  \\
    &= (1+C_M h)^{j+1+ik}  e_{0, 0} 
    + C_L h^{a } 
      \frac{(1+C_M h)^{j+1+ik} - 1}{L}  \notag \\
    &\leq \exp\big( C_M  (t_{i, j+1} - t_{0}) \big)   e_{0, 0} 
    + C_L h^{a} 
      \frac{\exp\big( C_M (t_{i, j+1} - t_{0}) \big) - 1}{C_M }.
\end{align}

Since this holds for $j = k -1$ and $0 \leq i \leq N-1$, we obtain
\begin{align*}
  \mathbb{E}_{p_h (\mu_{t_{i}})}[\| \mu_{t_i}  \|] 
  &= e_{i-1, k}  \notag \\
  &\leq \exp\big( C_M ( t_{i} - t_0 ) \big) e_{0, 0} 
  + C_L h^{a} 
    \frac{\exp\big( C_M ( t_{i} - t_0 ) \big) - 1}{C_M } \\
    &\leq \exp\big( C_M ( t_{i} - t_0 ) \big) C_{\mu_{}} h^b
  + C_L h^{a} 
    \frac{\exp\big( C_M ( t_{i} - t_0 ) \big) - 1}{C_M } \\
    &\leq \left\{ \exp\big( C_M ( t_{i} - t_0 ) \big) C_{\mu_{}} 
  + C_L
    \frac{\exp\big( C_M ( t_{i} - t_0 ) \big) - 1}{C_M }  \right\}  h^{\min (a, b)}
\end{align*}
for $h \leq 1$, 
which concludes the proof. Here, the final inequality is derived from Condition (III).
\end{proof}

\section{Related Work}

In this section, we briefly review uncertainty quantification methods for differential equations and related techniques based on the Ensemble Kalman Filter.

\paragraph{Probabilistic Numerics} For the quantification of discretization error in differential equations, a recently well-established paradigm is known as probabilistic numerics (PN)\citep{cockaynebayeisna2019,hennig2022probablistic}. By viewing numerical computation as a statistical inference problem, PN provides numerical solutions together with principled uncertainty quantification of discretization errors. As discussed in Section \ref{sec:general_layout}, Bayesian ODE solvers ~\citep{beck2024diffusiontemper, kersting2020, le2025modelling, schmidt2021sir, SchoberSarkkaHennig2018, pmlr-v162-tronarp22a, Tronarp2018, TroSarHen2021, dingling2025propagating, kramer2025adaptive} are among the most developed approaches within PN for ordinary differential equations. These methods place a prior distribution on the solution and its derivatives, and treat the constraint that the solution satisfies the differential equation as an observation (referred to as an information operator in the PN literature~\citep{cockaynebayeisna2019}). Conditioning on these observations yields a posterior distribution over the numerical solution.
An alternative approach is the perturbation-based method~\citep{abdulle2020random, conrad2017statistical, lie2022randomised, lie2019strong}. Rather than placing a prior directly on the solution, these methods intentionally introduce stochastic perturbations into the numerical solver and quantify uncertainty through the resulting distribution of numerical trajectories.

These approaches differ from the present work in two important respects. First, Bayesian ODE solvers place a Markov prior directly on the solution of the differential equation, whereas in this study the Markov prior is imposed on the discretization-error trajectory. By placing the prior on the discretization error itself, we can incorporate insights from classical error analysis into the prior model, which is difficult when the prior is defined on the solution. Second, unlike other PN approaches, our method infers the discretization error directly from observations. This perspective enables the simultaneous updating of the discretization error, alongside the standard targets of ODE inverse problems (e.g., model parameters), using observational data.

\paragraph{Extensions of Ensemble Kalman Filters}

Although our implementation of the EnKF assumes a linear observation process, it is important to note that a variety of extensions have been developed to accommodate nonlinear observation and state-transition models. A widely used approach is the Extended Kalman Filter (EKF) \citep{sarkka2023bayesian}, which locally linearizes nonlinear mappings via a first-order Taylor expansion based on the Jacobian matrix. However, this procedure introduces approximation errors inherent to the linearization. 
To this end, alternative methods have been developed that use only the outputs of the observation process generated from ensemble inputs, without requiring an explicit analytical representation of the observation operator~\citep{katzfuss2016understanding, evensen2009data}.

In our experiments, we employ fixed-lag smoothing. Since the primary objective of the present study is to demonstrate that estimating the discretization error mean can be reduced to a problem setting in which the EnKF can be directly applied without approximation, we adopt the classical fixed-lag smoothing approach.
We would like to emphasize, however, that other smoothing strategies may also improve performance. In particular, fixed-interval smoothing \citep{sarkka2023bayesian}, which targets the full time series, is a natural alternative for implementing our framework. By exploiting all available observations, it enables inference that more fully utilizes observational information.
Recent results have also shown that fixed-point smoothing can be implemented with low computational cost while retaining numerical stability and efficiency \citep{kramer2025adaptive}. These techniques therefore provide another computationally efficient option.

\section{Additional Experiment on the Lorenz--96 Model}\label{appendix_Lorenz96}

To evaluate the effectiveness of the proposed method in a high-dimensional setting, we consider the Lorenz--96 model, a standard benchmark system for data assimilation and chaotic dynamics. The system is defined by the following set of coupled ordinary differential equations:
\begin{equation}
\frac{d x_i(t)}{dt}
= \bigl(x_{i+1}(t) - x_{i-2}(t)\bigr)x_{i-1}(t)
- x_i(t) + F,
\quad i = 1,\ldots, d_\X, 
\end{equation}
with periodic indexing. In this experiment, we set the system dimension to $d_\X = 8$ and the forcing term to $F = 8.0$. The system is numerically solved using the Euler method with a step size of $0.01$. The observation operator $H$ is defined as the identity matrix, and observations are assumed to be available at $t = 1.0, 2.0, \ldots, 10.0$, with a fixed covariance matrix $\Gamma = \mathrm{diag}(1.0^2, 1.0^2, \ldots, 1.0^2)$.

The initial prior distribution is given by a multivariate Gaussian:
$
p(\mu_{t_0}) = \mathcal{N}(0, I).
$
The prior distribution is constructed following the same procedure as in Section~4. 
The hyperparameters $(\alpha, \beta, \gamma)$ are selected by maximizing the marginal likelihood over the candidate sets
$
\alpha \in \{-1.4, -1.2, \dots, 1.4\}, \quad
\beta \in \{0.1, 0.2, \dots, 1.0\}, \quad
\gamma \in \{0.5, 1.0, \dots, 4.0\}.
$
We use an ensemble size of $100$ for the Ensemble Kalman Filter (EnKF). For comparison, we implement a particle filter (PF) with $100{,}000$ particles. We attempted to select the hyperparameters using the same candidate sets and marginal-likelihood criterion as in the EnKF experiments. 
However, 
the computed log-marginal likelihood overflowed to infinity for all candidate parameter combinations.
A possible explanation is severe particle degeneracy, whereby the particles failed to adequately represent the true state and the total particle weight became numerically close to zero. 
Accordingly, we fix the hyperparameters at $(\alpha, \beta, \gamma) = (1.0, 1.0, 1.0)$.

\begin{table}[h]
\centering
\begin{tabular}{c|cccccccccc}
\hline
Method & $t=1$ & $t=2$ & $t=3$ & $t=4$ & $t=5$ & $t=6$ & $t=7$ & $t=8$ & $t=9$ & $t=10$ \\
\hline
EnKF & 0.7329 & 0.5500 & 0.6625 & 0.4909 & 0.7585 & 0.7389 & 0.6655 & 0.5939 & 1.0142 & 1.3573 \\
PF   & 0.1320 & 0.3055 & 0.9591 & 2.5171 & 2.2915 & 2.2845 & 3.5753 & 3.1893 & 1.2671 & 2.6477 \\
\hline
\end{tabular}
\caption{Estimation error comparison between EnKF and PF over time steps.}\label{tab:enkf_pf_lorenz96}
\end{table}

Table~\ref{tab:enkf_pf_lorenz96} reports the performance of EnKF and PF at each observation time. Their performance is evaluated using the absolute difference between the true discretization error and the ensemble mean estimate in EnKF, and between the true discretization error and the particle mean estimate in PF.
The results show that the performance of PF deteriorates rapidly after  $t\approx3$. This degradation is likely due to particle degeneracy, which becomes increasingly severe over time and prevents the PF from accurately representing the underlying posterior distribution. In contrast, EnKF maintains relatively stable performance and consistently provides more accurate estimates of the discretization error at later observation times. These results suggest that the proposed EnKF-based approach is more robust than PF for high-dimensional dynamical systems such as the Lorenz--96 model.

Figure~\ref{fig:disc_error_8d} compares the true discretization error with the estimated discretization error means obtained using EnKF and PF. The figure further demonstrates that the EnKF estimate closely follows the trajectory of the true discretization error, whereas PF fails to accurately capture the evolution of the discretization error in the eight-dimensional setting, even when $100{,}000$ particles are used.

\begin{figure}[t]
\vspace{5mm}
    \centering
    \begin{subfigure}[b]{\textwidth}
        \centering
        \includegraphics[width=\textwidth]{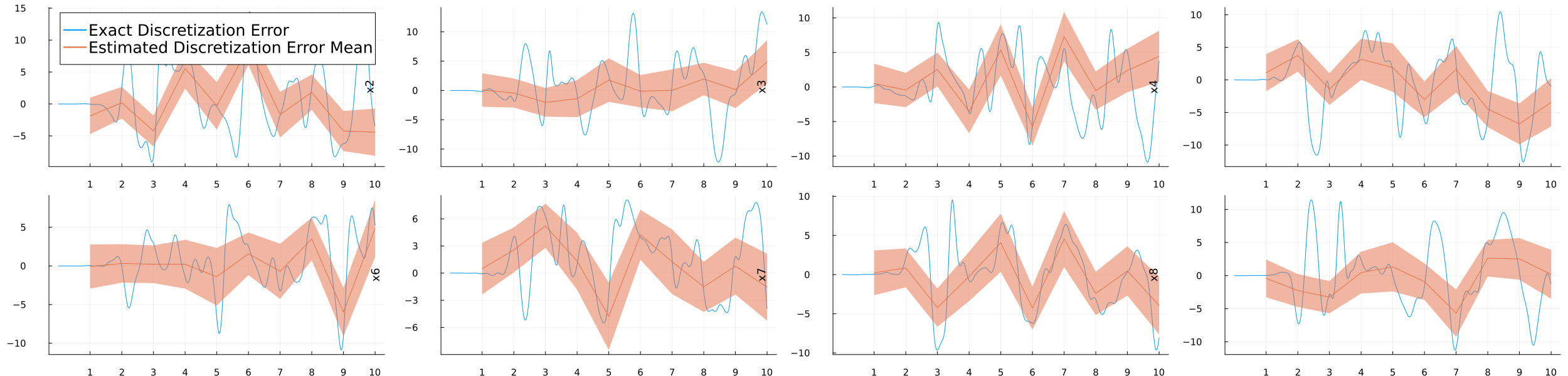}
        \caption{EnKF-based posterior on discretization error mean and true discretization error}
        \label{fig:disc_error_enkf_8d}
    \end{subfigure}


    \begin{subfigure}[b]{\textwidth}
        \centering
        \includegraphics[width=\textwidth]{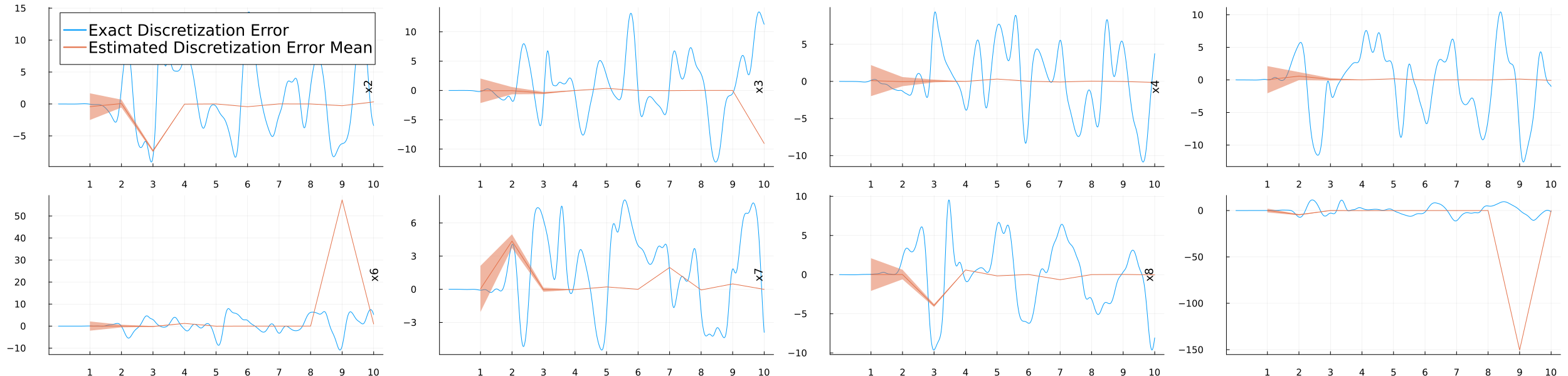}
        \caption{PF-based posterior on discretization error mean and true discretization error}
        \label{fig:disc_error_pf_8d}
    \end{subfigure}

    \caption{
Discretization error quantification results for the $8$-dimensional Lorenz--96 model. The posterior mean and $95\%$ credible interval for the mean discretization error are reported and compared with the true discretization error.
    }
    \label{fig:disc_error_8d}
\end{figure}

\end{document}